\documentclass[11pt,oneside,a4paper,reqno]{amsart}

\usepackage[utf8]{inputenc}
\usepackage[T1]{fontenc}
\usepackage[english]{babel}

\usepackage{newtxtext}

\usepackage[
    activate={true,nocompatibility},
    final,
    tracking=true,
    spacing=true,
    stretch=15,
    shrink=15
]{microtype}
\microtypecontext{spacing=nonfrench}
\SetTracking{encoding=*,family=*}{-20}

\usepackage[
    a4paper,
    margin=1.1in,
    top=1.2in,
    bottom=1.2in,
    headheight=15pt,
    headsep=0.4in,
    footskip=1.2cm
]{geometry}

\usepackage[dvipsnames,table,svgnames]{xcolor}

\definecolor{academicblue}{HTML}{1A365D}
\definecolor{winfull}{HTML}{7B2D3A}
\definecolor{wintrade}{HTML}{C99AA2}
\definecolor{hlred}{HTML}{9B2C2C}
\definecolor{rowgray}{gray}{0.95}

\usepackage{amsmath,amssymb,amsthm}
\usepackage{bm}

\usepackage{booktabs}
\usepackage{enumitem}
\setlist[enumerate]{label=(\roman*),leftmargin=*,align=left}
\usepackage{arydshln}

\usepackage{algorithm}
\usepackage{algpseudocode}
\algrenewcommand\algorithmiccomment[1]{\hfill\textcolor{gray!70}{$\triangleright$ #1}}

\usepackage{caption}
\usepackage{todonotes}
\usepackage{comment}
\usepackage[normalem]{ulem}

\usepackage{hyperref}
\hypersetup{
    colorlinks=true,
    citecolor=academicblue,
    linkcolor=academicblue,
    urlcolor=academicblue,
    breaklinks=true
}

\newcommand{\vf}[1]{\textcolor{winfull}{#1}}
\newcommand{\vt}[1]{\textcolor{wintrade}{#1}}

\newcommand{\jball}{B_{\lambda,1}}
\newcommand{\ji}{i}
\newcommand{\jqtwo}{\mathfrak{q}}
\newcommand{\jq}{\mathfrak{q}}
\newcommand{\jetwo}{\mathfrak{e}}
\newcommand{\je}{\mathfrak{e}}
\newcommand{\jf}{\mathfrak{f}}

\newcommand{\cpt}{\text{\upshape)}}
\newcommand{\opt}{\text{\upshape(}}

\newcommand{\RR}{\mathbb{R}}
\newcommand{\NN}{\mathbb{N}}

\newcommand{\CC}{\mathbb{C}}

\newtheorem{definition}{Definition}
\newtheorem{theorem}{Theorem}
\newtheorem{lemma}{Lemma}
\newtheorem{proposition}{Proposition}

\theoremstyle{remark}
\newtheorem{remark}{Remark}

\begin{document}

\title[A Fixed-Point Construction of the
Elementary Transcendental Functions]{A Fixed-Point Construction of the\\
Elementary Transcendental Functions}

\author{Fran\c{c}ois Alouges}
\address{{\textsc{Fran\c{c}ois Alouges.}} $^1$Universit\'e Paris Saclay, Universit\'e Paris Cit\'e, ENS Paris-Saclay, CNRS, SSA, INSERM, Centre Borelli, 4 avenue des Sciences, Gif-sur-Yvette 91190, France.
$^2$Institut Universitaire de France.}
\email{falouges@ens-paris-saclay.fr}

\author{Giovanni Di Fratta}
\address{{\textsc{Giovanni Di Fratta.}} $^1$Dipartimento di Matematica e Applicazioni ``R. Caccioppoli'', Universit\`a degli Studi di Napoli ``Federico II'', Via Cintia, Complesso Monte S. Angelo, 80126 Naples, Italy}
\email{giovanni.difratta@unina.it}

\author{Alberto Fiorenza}
\address{{\textsc{Alberto Fiorenza.}} $^1$Dipartimento di Architettura, Universit\`a di Napoli, Via Monteoliveto 3, 80134 Napoli, Italy.
$^2$Istituto per le Applicazioni del Calcolo ``Mauro Picone'', Sezione di Napoli, Consiglio Nazionale delle Ricerche, Via Pietro Castellino 111, 80131 Napoli, Italy}
\email{fiorenza@unina.it}

\author{Renato Fiorenza}
\address{{\textsc{Renato Fiorenza.}} Accademia di Scienze Fisiche e Matematiche, Via Mezzocannone 8, 80134 Napoli, Italy}
\email{renato.fiorenza.senior@gmail.com}

\begin{abstract}
We present a unified fixed-point construction of the elementary transcendental
functions, encompassing the real exponential, the complex exponential (sine and
cosine), and the natural logarithm. Each function is characterized as the
unique solution of a duplication identity established through the Banach
contraction principle. These foundational identities are $\je(2x)=\je^2(x)$ for
the exponentials, and $\log(x^{2})=2\log x$ for the logarithm. Because a direct
iteration of these identities is numerically unstable, owing to local
expansiveness at the target, the central idea transfers the analysis to a
residual function, on which the operator becomes a strict contraction with an
explicit convergence rate. Beyond its theoretical economy, which dispenses with
differential equations and power series, this characterization translates into
efficient algorithms for the machine evaluation of elementary functions: the
underlying framework yields floating-point kernels whose accuracy and iteration depth are governed by the theoretical contraction rate. We also present a computational study showing that, in a throughput-bound vectorized regime, these
kernels are competitive with standard production libraries, and in favorable
configurations exceed them, with the sine--cosine kernel faster at every tested iteration depth. These implementations operate without lookup tables or memory
traffic, an architectural advantage for modern high-performance and
energy-efficient computing.
\end{abstract}
\keywords{Elementary transcendental functions, exponential function,
logarithm, fixed-point iteration, Banach contraction principle, functional
equations, duplication identity, floating-point arithmetic, table-free
evaluation, special function computation, high-performance computing, SIMD
vectorization}
\subjclass[2020]{39B22, 47H10, 65D15, 65D20, 33B10, 26A09, 68W40}

\maketitle
\section{Introduction}\label{sec:intro}
The elementary transcendental functions, namely the exponential, the logarithm, 
and the trigonometric functions, are among the most frequently evaluated primitives 
in scientific computing. Large-scale numerical simulations routinely require billions 
of evaluations and the routines that serve those calls, refined over decades in 
libraries such as the system \texttt{libm}, Intel's SVML, and the vectorised library 
SLEEF~\cite{shibata2020sleef}, almost invariably rest on the same architectural 
choice: a precomputed lookup table brings the argument into a narrow interval, 
after which a short polynomial finishes the evaluation. 
Lookup tables are efficient in scalar code but poorly suited to modern Single Instruction Multiple Data (SIMD) architectures. SIMD execution achieves high throughput by evaluating multiple independent arguments simultaneously, but different vector lanes generally require different table entries. The resulting irregular memory accesses are implemented through gather operations, which incur substantially higher latency than contiguous loads and increase memory traffic. As a result, arithmetic pipelines are often stalled while waiting for data, reducing the throughput that SIMD parallelism is intended to deliver.

In this work, we develop a table-free construction of the elementary transcendental functions based on a structural characterisation rather than on polynomial approximation. Our starting point is the observation that each function is uniquely determined by a duplication identity, namely a functional equation relating the value at an argument to the value at either half or twice that argument, together with a natural normalisation condition. 
 
For the real and complex exponentials, this characterisation is captured by the following axioms:
\begin{enumerate}[label=$(\mathbf{A_{\arabic*}})$, leftmargin=*, align=left]
    \item \label{ax:A1} \emph{Duplication identity}:
    \begin{equation}
    \je(2x) = \je(x)^2 \quad \text{for all } x \in \RR, \label{eqax:A1}
    \end{equation}
    \item \label{ax:A2} \emph{Initial conditions}: 
    \begin{equation}
    \je(0) = 1,\qquad \je'(0)=\gamma\text{ with } \gamma \in \{1, \ji\}. \label{eqax:A2}
    \end{equation}
\end{enumerate}
The parameter $\gamma$ explicitly selects either the real exponential $e^x$ or the imaginary exponential $e^{\ji x}$, and through the latter, the sine and cosine functions. 
 
For the natural logarithm, the analogous structural definition imposes:
\begin{enumerate}[label=$(\mathbf{B_{\arabic*}})$, leftmargin=*, align=left]
    \item \label{ax:B1} \emph{Functional equation}:
    \begin{equation}
    \jf(x^2) = 2\jf(x) \quad \text{for } x > 0, \label{eqax:B1}
    \end{equation}
    \item \label{ax:B2} \emph{Cauchy initial conditions}: 
    \begin{equation}
    \jf(1) = 0,\qquad \jf'(1)=1. \label{eqax:B2}
    \end{equation}
\end{enumerate}
 
The common principle unifying these definitions is that the target function is characterised as the unique fixed point of an operator induced directly by the respective duplication identity. Once this operator is shown to be contractive, the Banach--Caccioppoli theorem immediately yields existence, uniqueness, and a convergent iterative construction on arbitrary compact intervals.

The route to a \emph{contraction} is the one subtle point, and it is the same
for all three functions. Applied naively, the duplication map is unstable: the
linearisation of the squaring $z\mapsto z^{2}$ at the initial value $\je(0)=1$
has derivative $2$, so the operator is locally expansive and the fixed point
repelling. We overcome this by transferring the analysis to a \emph{residual function}.
Writing $\je(x)=1+\gamma x+x^{2}\jq(x)$ isolates the second-order remainder
$\jq$, and the duplication identity, rewritten for $\jq$, becomes a genuine
contraction once the ambient space is equipped with a suitable weighted norm.
The logarithm admits an analogous reformulation based on repeated square-root
reduction rather than repeated squaring. Although the two constructions are
mathematically dual, Section~\ref{sec:computational} shows that this
distinction has important consequences in floating-point arithmetic.
 
The main contribution of this work is that the above characterization is not
merely theoretical, but intrinsically \emph{constructive}. The contraction
argument underlying the existence theorem directly yields an algorithmic
procedure: its two parameters, namely the degree of the polynomial seed and the number of iteration steps, are exchangeable at the explicit rate determined by the contraction.
 
The argument reduction relies exclusively on arithmetic operations, including
exponent extraction and square roots, rather than lookup tables. Consequently,
the resulting computational kernels operate without memory accesses beyond
register-resident data. Section~\ref{sec:computational} substantiates these
claims through a detailed floating-point analysis. On throughput-oriented
vector hardware, where arithmetic latency is amortized across SIMD lanes and the
elimination of table accesses becomes advantageous, the fixed-point kernels are
competitive with a production vector library, and in favorable configurations
outperform it, most consistently for the sine and cosine.
 
Moreover, the iterative formulation naturally exposes an accuracy-performance
trade-off: the target precision can be deliberately relaxed to reduce the
computational cost and achieve higher throughput when full machine accuracy is
not required. By systematically replacing memory traffic with arithmetic
operations, the proposed kernels also offer the potential for improved energy
efficiency, a direction left for future experimental investigation.
 
\smallskip
 
\noindent\textbf{Organization of the paper.} The remainder of the paper is organized as follows. Section~\ref{sec:related} contextualizes our approach within the classical methodologies for defining elementary functions, highlighting the conceptual shift from binary functional equations to unary duplication identities. Section~\ref{sec:fixpointOp} analyzes the fixed point operator governing the real and complex exponentials, establishing the main existence and uniqueness results derived from the Banach--Caccioppoli contraction principle. Section~\ref{sec:proofs} supplies the rigorous proofs for the foundational lemmas of this operator by introducing the concept of admissible weight sequences to strictly control its nonlinear behavior. Section~\ref{sec:banach_proof} carries out the parallel construction for the logarithm, defining its associated linear operator and proving its global strict contraction within an appropriately constructed complete metric space. Finally, Section~\ref{sec:computational} details the practical realization of these theoretical operators, translating them into table-free floating-point kernels and benchmarking their performance in a vectorized regime against the production library SLEEF.

\section{Ways of defining the elementary functions}\label{sec:related}
The definition of an elementary function through a functional relation, rather
than an explicit formula, is a recurring theme in analysis. Before introducing
our own characterization, we briefly review the classical alternatives, both to
place our approach in context and because each relies on a prerequisite that our
method avoids. From a computational perspective, the numerical realization of
the approaches reviewed here has been extensively studied and is now well
understood; see, for example, Muller et al.~\cite{muller2018handbook}.
Accordingly, we take this opportunity to adopt a more pedagogical viewpoint,
focusing on the mathematical structure underlying the different
characterizations before presenting the computational framework developed later
in this work.

\begin{description}[leftmargin=0pt, labelsep=1em, itemsep=1.5ex]
  \item[Power series] One may define $e^{z}$, $\sin x$, and $\cos x$ directly
  as the sums of their power series~\cite{rudin1976,lang1997,ahlfors1979}.
  Although algebraically clean, this route demands a comprehensive understanding of 
  infinite series merely to grasp the definitions. By contrast, characterizing these 
  functions through natural duplication identities offers a distinct pedagogical advantage. 
  The defining statements are elegant and entirely comprehensible without any prior 
  knowledge of sequences or series. From a didactical perspective, one can introduce the 
  functions through the natural properties they satisfy and assert their unique existence 
  early on, deferring the proof of the underlying fixed point theorem to a later stage. 
  Consequently, the foundational definitions remain highly accessible and require only 
  minimal prerequisites in analysis.

  \item[Integral functions and ODEs] Transcendental functions may be defined either as integral functions~\cite{spivak2008,hardy1908} or as the unique solutions to initial value problems. For instance, one commonly defines
  \[
    \log x = \int_{1}^{x}\frac{\mathrm{d}t}{t},\quad x\in(0,+\infty),
    \qquad
    \arcsin x = \int_{0}^{x}\frac{\mathrm{d}t}{\sqrt{1-t^{2}}},\quad x\in[-1,1],
  \]
  with the exponential and trigonometric functions subsequently recovered through functional inversion. Alternatively, the sine and cosine functions are frequently introduced as the unique solutions to the second order linear ordinary differential equation $y'' + y = 0$, subject to the respective initial conditions $y(0)=0, y'(0)=1$ and $y(0)=1, y'(0)=0$. Similar pedagogical remarks apply to these methods. 
  
  %While entirely rigorous, these approaches presuppose a fully developed theory of integration and the inverse function theorem, or the advanced machinery of existence and uniqueness theorems for differential equations, merely to formulate the initial definitions. Consequently, they share the same didactical drawback of requiring a heavy analytic framework before these elementary functions can even be introduced.

  \item[Axiomatic approaches] To permit an early development of calculus,
  several authors introduce the elementary functions by postulating their
  defining properties and deriving the calculus from these, deferring the
  constructive \emph{existence} proof to a later chapter. Apostol, in his
  classic text~\cite[\S2.5]{apostol1967}, characterises the sine and cosine not
  by differential equations but by their fundamental properties. He postulates
  functions $\cos$ and $\sin$ on $\RR$ with $\cos 0 = 1$, $\cos\pi=-1$,
  and $\sin(\pi/2)=1$, satisfying the difference formula
  \[
    \cos(y-x) = \cos y\,\cos x + \sin y\,\sin x
    \qquad(x,y\in\RR),
  \]
  together with the local inequality
  $0<\cos x<\tfrac{\sin x}{x}<\tfrac{1}{\cos x}$ for $0<x<\pi/2$. From these, he derives 
  their derivatives immediately, while postponing the proof that such functions
  \emph{exist} to the chapter on infinite series. Shilov proceeds in the same
  spirit~\cite[Ch.~5]{shilov1996}, defining $\sin$ and $\cos$ as functions
  satisfying the Pythagorean identity, the addition formulas
  \[
    \sin(x+y) = \sin x\,\cos y + \cos x\,\sin y,
    \qquad
    \cos(x+y) = \cos x\,\cos y - \sin x\,\sin y
    \qquad(x,y\in\RR),
  \]
  and the fundamental inequality $0<\sin x<x<\tfrac{\sin x}{\cos x}$ for
  sufficiently small $x>0$, again deferring existence to the later theory of
  power series. For the remaining two functions, the two authors part ways.
  Apostol defines the logarithm by integral inversion,
  $\log x = \int_{1}^{x}\mathrm{d}t/t$. Shilov, by contrast, characterises the
  \emph{logarithm} axiomatically as the unique increasing function on
  $(0,+\infty)$ satisfying the homomorphism law $\ell(xy)=\ell(x)+\ell(y)$
  together with a normalisation $\ell(a)=1$ for a given $a>1$, and subsequently obtains
  the exponential as its inverse function.
\end{description}

The axiomatic characterisations just recalled share a structural feature that,
on reflection, one might have thought unavoidable: each defines its function
through a relation between its values at a \emph{pair} of independent points.
The addition and difference formulas for the trigonometric functions relate
$\cos(x\pm y)$ to the values at $x$ and at $y$. The exponential, in the standard
treatment, is fixed by the Cauchy relation $\je(x+y)=\je(x)\je(y)$, expressing
that it is a homomorphism of the additive group into the multiplicative one; the
logarithm, dually, is fixed by $\ell(xy)=\ell(x)+\ell(y)$. Which of the last two is taken
as primitive and which as the inverse of the other is a matter of expository
taste. Shilov starts from the logarithm, whereas texts such as
Prodi's~\cite{prodi1977} start from the exponential. However, in either case, the
defining equation is \emph{binary}: it constrains the function on
$\RR\times\RR$, and it is exactly this binary, homomorphism-theoretic form that
has long been taken to be the essence of an axiomatic definition. It is also
what obstructs a fixed-point reading, because a relation in two independent arguments is
not an operator acting on functions of one variable, leaving the
contraction principle with nothing to act upon.

The starting point of the present paper is the observation that the \emph{binary
relation is not necessary}. The duplication identities 
\begin{equation}
\je(2x)=\je(x)^{2}\quad \text{and}\quad \log(x^{2})=2\log x
\end{equation}
are \emph{unary}: each relates the value of the function
at a single point to its value at the doubled or squared argument, imposing no
constraint that involves a second, independent variable. Supplemented only by an
elementary normalisation at the origin and a minimal regularity requirement,
they suffice to determine the function uniquely. Consequently, the binary homomorphism law is
recovered as a \emph{consequence}, rather than assumed as a hypothesis. This
reduction from a binary to a unary relation is precisely what makes the
fixed-point formulation possible. A unary duplication identity naturally defines
an operator acting on a space of single-variable functions, and it is exactly upon this
operator that the Banach contraction principle can be applied. The remainder of the
paper develops this observation and, in Section~\ref{sec:computational}, details its
computational consequences.

Two further remarks locate our construction more sharply. First, the geometric
definition of the trigonometric functions through arc length on the unit circle
conceals a well-known circularity, noted by Richman~\cite{richman1984} and
revisited recently~\cite{babilio2025}: arc length is used to define sine and
cosine, yet those functions are later used to define arc length by
integration~\cite{stewart2016,swokowski2017}. A fixed-point characterisation is
free of this circularity because it never invokes arc length. Second, the
duplication structure we exploit is closely related to the ``scaling and squaring''
method for the matrix exponential~\cite{almohy2009new, higham2005scaling, higham2008, moler2003}, which is one of the most effective algorithms in numerical linear algebra. Furthermore, the study of the underlying functional equations is itself
classical~\cite{aczel1966,kuczma2009,granas2003,zeidler1986,trefethen2019}. Our contribution is to make the contraction \emph{global and explicit}, achieving uniformity on arbitrary compact intervals through a weighted norm, and thereby turning the characterisation into a numerically stable iteration.

\section{Analysis of the Fixed-Point Operator} \label{sec:fixpointOp}
In this section, we prove the existence and uniqueness of a continuous function
$\jetwo:\RR\to\CC$ satisfying the duplication and initial-condition axioms
$(\mathbf{A_1},\mathbf{A_2})$, defined in \eqref{eqax:A1} and
\eqref{eqax:A2}. The key ingredient of the proof is the classical
Banach--Caccioppoli contraction principle, which we recall for completeness
(see \cite{granas2003,zeidler1986} for further details).

\begin{theorem}[Banach--Caccioppoli]
  \label{thm:banach}Let $(X, d)$ be a complete metric space, and let $T : X
  \rightarrow X$ be a contraction mapping; that is, there exists a constant
  $\kappa \in [0, 1)$ such that
  \begin{equation}
    d (T (x), T (y)) \leqslant \kappa d (x, y)  \quad \text{for all } x, y \in
    X.
  \end{equation}
  Then $T$ admits a unique fixed point: there exists a unique $x^{\ast} \in X$
  such that $T (x^{\ast}) = x^{\ast}$. Furthermore, for any arbitrary initial
  point $x_0 \in X$, the iterative sequence defined by $x_{n + 1} = T (x_n)$
  converges to $x^{\ast}$ in the metric $d$, and the following error estimates
  hold for all $n \geqslant 1$:
  \begin{enumerate}
    \item A priori estimate:
    \begin{equation}
      d (x_n, x^{\ast}) \leqslant \frac{\kappa^n}{1 - \kappa} d (x_1, x_0) .
    \end{equation}
    \item A posteriori estimate:
    \begin{equation}
      d (x_n, x^{\ast}) \leqslant \frac{\kappa}{1 - \kappa} d (x_n, x_{n - 1})
      .
    \end{equation}
  \end{enumerate}
\end{theorem}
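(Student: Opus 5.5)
The plan is the classical Picard argument: show that the orbit of an arbitrary starting point is a Cauchy sequence, obtain the fixed point from completeness and continuity of $T$, prove uniqueness directly from the contraction inequality, and then read off the two error estimates from the same geometric bound.

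\emph{Step 1: consecutive iterates.} Fix $x_0\in X$ and set $x_{n+1}=T(x_n)$. Applying the contraction inequality repeatedly gives
\[
d(x_{k+1},x_k)\leqslant \kappa^k d(x_1,x_0) \qquad \text{for all } k\geqslant 0 .
\]

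\emph{Step 2: the Cauchy bound.} For $m>n$, the triangle inequality and summation of the geometric series give
\[
d(x_m,x_n)\leqslant \sum_{k=n}^{m-1}\kappa^k d(x_1,x_0)\leqslant \frac{\kappa^n}{1-\kappa}\,d(x_1,x_0).
\]
Since $\kappa<1$, the right-hand side tends to zero as $n\to\infty$. Hence $(x_n)$ is Cauchy, and by completeness it converges to some $x^\ast\in X$.

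\emph{Step 3: $x^\ast$ is a fixed point.} A contraction is Lipschitz, hence continuous, so $T(x_n)\to T(x^\ast)$. But $T(x_n)=x_{n+1}\to x^\ast$, and limits in a metric space are unique, so $T(x^\ast)=x^\ast$.

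\emph{Step 4: uniqueness.} If $T(y^\ast)=y^\ast$ as well, then
\[
d(x^\ast,y^\ast)=d(Tx^\ast,Ty^\ast)\leqslant \kappa\, d(x^\ast,y^\ast).
\]
Since $\kappa<1$, this forces $d(x^\ast,y^\ast)=0$, so $y^\ast=x^\ast$.

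\emph{Step 5: the error estimates.} For the a priori estimate, let $m\to\infty$ in the bound of Step 2. This uses continuity of $d$ in its first argument: $d(x_m,x_n)\to d(x^\ast,x_n)$. The result is
\[
d(x_n,x^\ast)\leqslant \frac{\kappa^n}{1-\kappa}\,d(x_1,x_0).
\]
For the a posteriori estimate, apply the a priori estimate to the sequence restarted at $x_{n-1}$ and taken one step. Its first two terms are $x_{n-1}$ and $x_n$, so
\[
d(x_n,x^\ast)\leqslant \frac{\kappa}{1-\kappa}\,d(x_n,x_{n-1}).
\]
An equivalent direct route is to write $d(x_n,x^\ast)\leqslant d(x_n,x_{n+1})+d(x_{n+1},x^\ast)$ and use $d(x_{n+1},x^\ast)\leqslant \kappa\, d(x_n,x^\ast)$ together with $d(x_{n+1},x_n)\leqslant \kappa\, d(x_n,x_{n-1})$.

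None of these steps is a genuine obstacle. The only points that need care are the passage to the limit in Step 5, which relies on continuity of the metric, and the index bookkeeping in the restart argument.
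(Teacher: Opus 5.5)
Your proof is correct and complete. It is the standard Picard-iteration argument: a geometric bound on consecutive iterates, the Cauchy estimate, the fixed point obtained from continuity of $T$, uniqueness from $(1-\kappa)\,d(x^\ast,y^\ast)\leqslant 0$, the a priori bound by letting $m\to\infty$ using continuity of $d$, and the a posteriori bound by restarting the iteration (or by the one-step triangle inequality). The paper gives no proof of its own; it recalls the theorem as classical and refers to \cite{granas2003,zeidler1986}, which give essentially this same argument.
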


\begin{remark}
  The error estimates of Theorem~\ref{thm:banach} supply both theoretical and practical guarantees for the convergence behaviour of iterative schemes. The {{\em a priori\/}} estimate establishes a theoretical convergence rate: the
  error decays at least geometrically, with a factor $\kappa^n$, and thus
  quantifies how rapidly iterates approach the fixed point in the asymptotic
  regime. The {{\em a posteriori\/}} estimate, by contrast, is computable from
  the iterates and furnishes an on-line measure of the current error. In
  numerical implementations this distinction is fundamental: the {{\em a
  priori\/}} bound informs algorithm design and expectations about worst-case
  performance, while the {{\em a posteriori\/}} bound is indispensable for
  reliable error control, stopping criteria, and the detection of spurious
  stagnation. In particular, the latter guarantees that if the inter-iterate
  distance $d (x_n, x_{n - 1})$ is small, then the iterate $x_n$ is provably
  close to the true solution $x^{\ast}$, so that apparent stagnation cannot
  occur at a significant distance from the fixed point.
\end{remark}

Our argument relies on the following two lemmas, whose proofs (which utilize
Theorem~\ref{thm:banach} operating on suitable weighted norms) are deferred to
the next section.

\begin{lemma}
  \label{lemma:fixpoints}Let $\gamma \in \left\{ 1, \ji \right\}$. There
  exists a unique {{\em continuous\/}} function $\jqtwo_{\ast} \in
  \mathcal{C}^0 \left( \RR, \CC \right)$ which solves the functional equation
  \begin{equation}
    \mathfrak{q} (x) = \frac{\gamma^2}{4} + \left( \frac{1}{2} + \gamma
    \frac{x}{4} \right) \mathfrak{q} \left( \frac{x}{2} \right) +
    \frac{x^2}{16} \mathfrak{q}^2 \left( \frac{x}{2} \right) \qquad \left( x
    \in \RR \right) . \label{eq:functional_q}
  \end{equation}
  Moreover, $\jqtwo_{\ast}$ is the unique solution to {{\em
  \eqref{eq:functional_q}\/}} on any interval $I_a := [- a, a]$, $a > 0$,
  within the class $\mathcal{B} (I_a, \CC)$ of {{\em bounded\/}} (not
  necessarily continuous) functions.
\end{lemma}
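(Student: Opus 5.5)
The plan is to apply Theorem~\ref{thm:banach} on each interval $I_a$ to the operator
\[
(T\jq)(x) := \frac{\gamma^2}{4} + \Bigl(\frac12+\gamma\frac x4\Bigr)\jq\Bigl(\frac x2\Bigr) + \frac{x^2}{16}\,\jq^2\Bigl(\frac x2\Bigr), \qquad x\in I_a,
\]
and then glue the fixed points obtained for different $a$. Since $x\in I_a$ implies $x/2\in I_{a/2}\subset I_a$, the operator $T$ maps $\mathcal{B}(I_a,\CC)$ into itself. It also maps $\mathcal{C}^0(I_a,\CC)$ into itself, because $x\mapsto \jq(x/2)$ is continuous whenever $\jq$ is.

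Two difficulties prevent a direct contraction in the plain sup norm. The operator is quadratic, and the linear coefficient satisfies $|1/2+\gamma x/4|\ge 1$ once $|x|$ is moderately large. I would handle both with a weight $\psi\ge 1$ on $I_a$ that is even and nondecreasing in $|x|$, and work on the set
\[
X_R := \{\jq : |\jq(x)|\le R\,\psi(x)\ \text{for all } x\in I_a\},
\]
with the metric $d(\jq,\mathfrak p)=\sup_{I_a}|\jq-\mathfrak p|/\psi$. Because $\psi$ is bounded above and below on $I_a$, this metric is equivalent to the sup norm. Hence $X_R$ is complete, both in $\mathcal{B}(I_a,\CC)$ and intersected with $\mathcal{C}^0(I_a,\CC)$ as a closed subset.

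The two estimates to arrange are invariance and contraction. For $\jq,\mathfrak p\in X_R$,
\[
|T\jq(x)|\le \tfrac14+\bigl|\tfrac12+\gamma\tfrac x4\bigr|R\psi(\tfrac x2)+\tfrac{x^2}{16}R^2\psi(\tfrac x2)^2,
\]
\[
|T\jq(x)-T\mathfrak p(x)|\le\Bigl(\bigl|\tfrac12+\gamma\tfrac x4\bigr|+\tfrac{x^2}{8}R\psi(\tfrac x2)\Bigr)\psi(\tfrac x2)\,d(\jq,\mathfrak p).
\]
It therefore suffices to have, for some $\kappa<1$ and all $x\in I_a$,
\[
\Bigl(\bigl|\tfrac12+\gamma\tfrac x4\bigr|+\tfrac{x^2}{8}R\psi(\tfrac x2)\Bigr)\psi(\tfrac x2)\le\kappa\,\psi(x),
\qquad
\tfrac14+\Bigl(\bigl|\tfrac12+\gamma\tfrac x4\bigr|+\tfrac{x^2}{16}R\psi(\tfrac x2)\Bigr)R\,\psi(\tfrac x2)\le R\,\psi(x).
\]

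I would construct $\psi$ shell by shell, and this is where I expect the main work. Fix $R\ge 1$ and $\kappa\in(3/4,1)$. Choose $\delta>0$ so small that $\tfrac12+\tfrac\delta4+\tfrac{\delta^2}{8}R\le\kappa$ and $\tfrac14+\bigl(\tfrac12+\tfrac\delta4+\tfrac{\delta^2}{16}R\bigr)R\le R$. The second choice is possible because $\tfrac14+\tfrac R2<R$ for $R\ge1$. Set $\psi\equiv1$ on $[-\delta,\delta]$, so both inequalities hold there.

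Only finitely many dyadic shells $2^{k}\delta<|x|\le 2^{k+1}\delta$ meet $I_a$. On each successive shell, $\psi(x/2)$ is already defined and bounded, so I would take $\psi$ constant on the shell, equal to a value large enough that both inequalities hold. This is possible because the left-hand sides are bounded there, while the right-hand sides are proportional to that constant. This is essentially the admissible weight sequence alluded to in the paper. The result is a piecewise-constant $\psi\ge1$, so $T$ is a $\kappa$-contraction of $X_R$ into itself.

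Finally I would derive the conclusions. Banach--Caccioppoli on $X_R\cap\mathcal{C}^0(I_a,\CC)$, with $R=1$, gives a continuous fixed point $\jq_a$ on $I_a$. For uniqueness among bounded functions, let $\jq$ be any bounded solution on $I_a$ with $|\jq|\le M$, and redo the construction with $R=\max(1,M)$. Since $\psi\ge1$, $\jq\in X_R$, and $\jq_a$ also lies in $X_R$ because $X_1\subset X_R$. Uniqueness of the fixed point in $X_R$ then gives $\jq=\jq_a$.

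To pass to all of $\RR$, note that the restriction of $\jq_{a'}$ to $I_a$, for $a'>a$, is a bounded solution on $I_a$. Hence $\jq_{a'}|_{I_a}=\jq_a$, and the family glues to a unique $\jq_\ast\in\mathcal{C}^0(\RR,\CC)$. Any other continuous solution on $\RR$ is bounded on each $I_a$, so it coincides with $\jq_\ast$.
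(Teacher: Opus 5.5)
Your proof is correct apart from one slip noted below. Its architecture is the paper's: a weighted sup norm under which $T$ maps a ball into itself and contracts it, Banach--Caccioppoli on each $I_a$, then gluing by uniqueness.

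The difference lies in the weight. The paper fixes explicit admissible families ($e^{\lambda x^2}$ or $(1+cx^2)^{\lambda}$), stays on the unit ball $\{|\mathfrak{q}|\le\omega_\lambda\}$, and sends $\lambda\to\infty$. This buys a closed-form weight, an abstract framework, and a contraction constant with $\limsup_{\lambda\to\infty}\kappa(\lambda)\le 1/2$. You instead build a piecewise-constant weight shell by shell, using that $x\mapsto x/2$ maps each dyadic shell into the previous one, and you allow a ball of any radius $R$. This is more elementary (no limiting parameter, no exponential) but gives no explicit weight.

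Its real payoff is the bounded-uniqueness clause. The paper's Step 3 simply reruns the contraction in $\mathcal{B}(I_a,\CC)$, which yields uniqueness only inside $\{|\mathfrak{q}|\le\omega_\lambda\}$. It leaves implicit why an arbitrary bounded solution lies there. It does for large $\lambda$, because a solution bounded by $M$ satisfies $\sup_{|x|\le r}|\mathfrak{q}(x)|\le 1/(2-r-r^2M/4)<1$ for small $r$, but this has to be argued. Your choice $R\ge M$ settles it directly.

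The slip: $\psi$ depends on $R$, through $\delta$ and the shell constants. After you redo the construction with $R=\max(1,M)$, the new $X_R$ need not contain the old $X_1$, so your justification of $\mathfrak{q}_a\in X_R$ via $X_1\subset X_R$ does not go through as written. The fix is immediate. $\mathfrak{q}_a$ is continuous on the compact $I_a$, hence bounded, so take $R\ge\max(1,M,\sup_{I_a}|\mathfrak{q}_a|)$. Since the new weight is $\ge 1$, both $\mathfrak{q}$ and $\mathfrak{q}_a$ lie in $X_R$, and uniqueness of the fixed point there gives $\mathfrak{q}=\mathfrak{q}_a$.

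Incidentally, the restriction $\kappa>3/4$ is unnecessary: any $\kappa\in(1/2,1)$ is reachable by taking $\delta$ small.
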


\begin{lemma}
  \label{lemma:ifexistse}Let $\gamma \in \left\{ 1, \ji \right\}$. Let
  $\je: \RR \to \CC$ be a continuous function satisfying the
  duplication and initial condition axioms $(\mathbf{A_1}, \mathbf{A_2})$.
  Then, there exists a unique continuous function $\mathfrak{q}: \RR \to \CC$,
  which we refer to as the residue, such that
  \begin{equation}
    \je (x) = 1 + \gamma x + x^2 \mathfrak{q} (x)  \qquad (x \in \RR)
    . \label{eq:ansatzq}
  \end{equation}
  Moreover, this function $\mathfrak{q}$ coincides with the unique continuous
  solution of the functional equation {{\em \eqref{eq:functional_q}\/}}. In
  particular, $\mathfrak{q} (0) = \gamma^2 / 2$.
\end{lemma}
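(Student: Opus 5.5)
The plan is to define the residue explicitly, check that it satisfies \eqref{eq:functional_q} away from the origin, prove it is bounded near $0$, and then let the bounded-uniqueness clause of Lemma~\ref{lemma:fixpoints} supply continuity.

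\emph{Definition and uniqueness.} For $x\neq 0$ the identity \eqref{eq:ansatzq} forces
\[
\jq(x):=\frac{\je(x)-1-\gamma x}{x^{2}} .
\]
So $\jq$ is uniquely determined on $\RR\setminus\{0\}$, and it is continuous there because $\je$ is. A continuous $\jq$ can then have only one value at $0$, namely the limit. This settles uniqueness, and it remains to show that this limit exists.

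\emph{Functional equation off the origin.} Substitute \eqref{eq:ansatzq} into $\je(x)=\je(x/2)^{2}$, which is \eqref{eqax:A1} applied at $x/2$. Write $\je(x/2)=1+\alpha$ with $\alpha=\gamma x/2+\tfrac{x^{2}}{4}\jq(x/2)$, and expand $(1+\alpha)^{2}=1+2\alpha+\alpha^{2}$. The terms $1+\gamma x$ cancel on both sides. Dividing the remainder by $x^{2}$ gives exactly \eqref{eq:functional_q} for every $x\neq0$. At $x=0$, equation \eqref{eq:functional_q} reads $\jq(0)=\gamma^{2}/4+\jq(0)/2$, whose only solution is $\gamma^{2}/2$. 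I therefore set $\jq(0):=\gamma^{2}/2$, so that \eqref{eq:functional_q} holds on all of $\RR$.

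\emph{Boundedness near $0$ (main step).} Set $r(x):=x\,\jq(x)=\bigl(\je(x)-1-\gamma x\bigr)/x$. Since $\je'(0)=\gamma$ by \eqref{eqax:A2}, we have $r(x)\to0$ as $x\to0$. The quadratic term of \eqref{eq:functional_q} can be rewritten as
\[
\tfrac{x^{2}}{16}\jq^{2}(x/2)=\tfrac14\,r(x/2)^{2},
\]
which is bounded by $1/4$ once $|x|\le\delta_0$ for a suitable small $\delta_0$. Shrinking $\delta_0$ further if needed, for $0<|x|\le\delta_0$ I get
\[
|\jq(x)|\le C+\theta\,|\jq(x/2)|,\qquad C:=\tfrac14+\tfrac14,\quad \theta:=\tfrac12+\tfrac{\delta_0}{4}<1 .
\]
Read in the direction $|\jq(x/2)|\ge(|\jq(x)|-C)/\theta$, this inequality propagates a bound outward, not inward, so it is not the right tool for reaching the origin. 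Instead I use $r\to0$ directly.

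Choose $\delta_0$ so that $|r(y)|\le\eta$ for $|y|\le\delta_0$, with $\eta$ small. Then \eqref{eq:functional_q} gives
\[
|\jq(x)|\le \tfrac14+\bigl(\tfrac12+\tfrac{|x|}{4}\bigr)|\jq(x/2)|+\tfrac14\eta^{2}.
\]
Solving this for $\jq(x/2)$ is the wrong direction again. The correct move is to apply \eqref{eq:functional_q} at the point $2y$, which expresses $\jq(y)$ in terms of $\jq(2y)$:
\[
\bigl(\tfrac12+\tfrac{y}{2}\gamma\bigr)\jq(y)=\jq(2y)-\tfrac{\gamma^{2}}4-\tfrac14 r(y)^{2}.
\]
Hence
\[
|\jq(y)|\le\frac{|\jq(2y)|+\tfrac14+\tfrac14\eta^{2}}{\tfrac12-\tfrac{|y|}{2}} .
\]
The factor in front of $|\jq(2y)|$ is about $2$, so iterating from the shell $[\delta_0/2,\delta_0]$ down towards $0$ only yields $|\jq(y)|\lesssim 1/|y|$. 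This is no better than $r\to0$ already gives, and it is the real obstacle.

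To get past it, I will bootstrap with the smallness of $r$ inside the quadratic term, writing $\tfrac14 r(y)^{2}=\tfrac{y}{4}\,r(y)\,\jq(y)$. The relation at the point $2y$ then becomes
\[
\jq(2y)=\tfrac{\gamma^{2}}{4}+\bigl(\tfrac12+\tfrac{\gamma y}{2}+\tfrac{y}{4}r(y)\bigr)\jq(y),
\]
which is linear in $\jq(y)$ with a coefficient close to $\tfrac12$. Set $M_k:=\sup\{|\jq(y)|:2^{-k-1}\delta_0\le|y|\le2^{-k}\delta_0\}$, which is finite for each $k$ by continuity away from $0$. The relation yields $M_{k+1}\le(2+\epsilon_k)(M_k+\tfrac14)$, with $\epsilon_k\to0$.

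At this point I would instead use $\jq(x)-\gamma^{2}/2$ and the contraction structure of Lemma~\ref{lemma:fixpoints}. The step I would try first is to compare $\jq$ on $I_{\delta_0}\setminus\{0\}$ with the fixed point $\jq_{\ast}$ through the weighted-norm contraction, applied to the bounded function $\min(|\jq|,\cdot)$ by truncation. I flag this boundedness-at-$0$ step as the crux of the proof.

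\emph{Conclusion.} Once $\jq$ is known to be bounded on some $I_a$, it is a bounded solution of \eqref{eq:functional_q} on $I_a$. Lemma~\ref{lemma:fixpoints} then gives $\jq=\jq_{\ast}$ on $I_a$, so $\jq$ is continuous at $0$ with $\jq(0)=\jq_{\ast}(0)=\gamma^{2}/2$. On $\RR\setminus\{0\}$ it is continuous and solves \eqref{eq:functional_q}, so it coincides with $\jq_{\ast}$ on every $I_a$, hence on all of $\RR$.
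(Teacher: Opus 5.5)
Your definition of $\jq$, the uniqueness of the residue, the derivation of \eqref{eq:functional_q} for $x\neq0$ with the choice $\jq(0)=\gamma^2/2$, and the final appeal to the bounded-uniqueness clause of Lemma~\ref{lemma:fixpoints} all match the paper. The gap is the step you flag yourself: boundedness of $\jq$ near $0$ is never proved. The outward iteration and the $M_k$ bootstrap only give $|\jq(y)|\lesssim 1/|y|$. The truncation fallback cannot work either. First, $\min(|\jq|,\cdot)$ no longer solves \eqref{eq:functional_q}. More fundamentally, such an argument never uses $\je'(0)=\gamma$, and that hypothesis is indispensable. Take $\je(x)=e^{(\gamma+\beta)x}$ with $\beta\in\CC\setminus\{0\}$: it satisfies $(\mathbf{A_1})$ and $\je(0)=1$. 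Its residue $(\je(x)-1-\gamma x)/x^2=\beta/x+O(1)$, extended by $\gamma^2/2$ at $0$, is an unbounded solution of \eqref{eq:functional_q}, and Lemma~\ref{lemma:fixpoints} says nothing about it.

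The missing idea is the one you discarded. Your inequality bounds $\jq(x)$ by $\jq$ at $x/2$, which is closer to the origin, and iterating it all the way in is exactly right. The reason is that the only information you have at the origin, $r(y)=y\jq(y)\to0$, controls precisely the remainder $2^{-j}\jq(x/2^j)=r(x/2^j)/x$. What broke your version is the uniform factor $\theta=\tfrac12+\tfrac{\delta_0}{4}>\tfrac12$: then $\theta^j|\jq(x/2^j)|=(2\theta)^j|r(x/2^j)|/|x|$, and $(2\theta)^j\to\infty$ while $r\to0$ at an unknown rate. The paper keeps the factor exactly $\tfrac12$ by absorbing the cross term into the square:
\[ \jq(x)=\tfrac12\,\jq\bigl(\tfrac x2\bigr)+\tfrac14\Bigl(\gamma+r\bigl(\tfrac x2\bigr)\Bigr)^2 . \]
If $|r|\le c$ on $I_\delta$, this gives $|\jq(x)|\le\tfrac12|\jq(x/2)|+\tfrac14(1+c)^2$. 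After $j$ steps, $|\jq(x)|\le |r(x/2^j)|/|x|+\tfrac12(1+c)^2$, and letting $j\to\infty$ yields $|\jq|\le\tfrac12(1+c)^2$ on $I_\delta\setminus\{0\}$. Your cruder inequality would also suffice if you kept the scale-dependent coefficient $\tfrac12+|x|/2^{i+2}$ at the point $x/2^i$, since the product of the first $j$ of these is at most $2^{-j}e^{|x|}$. The same iteration even gives the series $\jq(x)=\tfrac14\sum_{i\ge0}2^{-i}\bigl(\gamma+r(x/2^{i+1})\bigr)^2$, from which $\jq(x)\to\gamma^2/2$ follows directly.
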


\begin{remark}
  We emphasize that the uniqueness of $\jq$ must be understood within the
  class of continuous functions. Without this restriction, the value of $\jq
  (0)$ could be chosen arbitrarily, since the factor $x^2$ in
  \eqref{eq:ansatzq} makes the representation insensitive to the value at the
  origin. Continuity therefore ensures that the residue is uniquely
  determined.
\end{remark}

\begin{theorem}
  \label{thm:main}There exists a unique continuous function $\je: \RR
  \to \CC$ satisfying the duplication and initial condition axioms
  $(\mathbf{A_1}, \mathbf{A_2})$. Specifically, $\je (x) = 1 + \gamma
  x + x^2 \mathfrak{q} (x)$ with $\mathfrak{q}$ being the unique continuous
  solution of the functional equation {{\em \eqref{eq:functional_q}\/}}.
\end{theorem}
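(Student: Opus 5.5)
Uniqueness follows directly from Lemma~\ref{lemma:ifexistse} together with Lemma~\ref{lemma:fixpoints}. If $\je_1,\je_2$ are two continuous functions satisfying $(\mathbf{A_1},\mathbf{A_2})$, then Lemma~\ref{lemma:ifexistse} writes each as $\je_k(x)=1+\gamma x+x^2\jq_k(x)$. Here each $\jq_k$ is a continuous solution of \eqref{eq:functional_q}. By the uniqueness part of Lemma~\ref{lemma:fixpoints}, $\jq_1=\jq_2=\jq_\ast$, hence $\je_1=\je_2$. The same reasoning yields the stated representation formula once existence is known.

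For existence, I would define $\je(x):=1+\gamma x+x^2\jq_\ast(x)$, where $\jq_\ast$ is the continuous solution given by Lemma~\ref{lemma:fixpoints}. Continuity of $\je$ is immediate.

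The duplication identity is a direct algebraic check. Put $y=x/2$, so that
\[
\je(y)^2=1+2\gamma y+\gamma^2y^2+2y^2\jq_\ast(y)+2\gamma y^3\jq_\ast(y)+y^4\jq_\ast(y)^2 .
\]
On the other side, $\je(2y)=1+2\gamma y+4y^2\jq_\ast(2y)$. Substituting \eqref{eq:functional_q} at the point $2y$ gives
\[
4y^2\jq_\ast(2y)=\gamma^2y^2+(2y^2+2\gamma y^3)\jq_\ast(y)+y^4\jq_\ast(y)^2 .
\]
This matches the expansion of $\je(y)^2$ term by term. Hence $\je(2x)=\je(x)^2$ for all $x$. (This computation is presumably how \eqref{eq:functional_q} was derived in the first place.)

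The initial conditions are the only delicate point, because Lemma~\ref{lemma:fixpoints} as stated does not mention differentiability. Clearly $\je(0)=1$. For the derivative,
\[
\frac{\je(h)-1}{h}=\gamma+h\,\jq_\ast(h)\;\longrightarrow\;\gamma \qquad (h\to0),
\]
since $\jq_\ast$ is continuous, hence bounded near $0$. Thus $\je'(0)=\gamma$, and \eqref{eqax:A2} needs only continuity of $\jq_\ast$ at the origin, which Lemma~\ref{lemma:fixpoints} provides.

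The main obstacle is therefore not in this theorem, which is a bookkeeping argument on top of the two lemmas. The real work lies in Lemma~\ref{lemma:fixpoints}, namely the contraction in a weighted norm. In the present proof I would only take care that the verification of the axioms uses nothing beyond continuity of $\jq_\ast$. In particular it should not need $\jq_\ast(0)=\gamma^2/2$, although that value can be read off from \eqref{eq:functional_q} at $x=0$ as a consistency check.
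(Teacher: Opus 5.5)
Your proposal is correct and follows the paper's proof essentially step for step. Uniqueness comes from Lemma~\ref{lemma:ifexistse} combined with the uniqueness in Lemma~\ref{lemma:fixpoints}. Existence comes from defining $\je(x)=1+\gamma x+x^2\jq_\ast(x)$, checking the duplication identity by expanding the square and using \eqref{eq:functional_q} at $2x$, and obtaining $\je'(0)=\gamma$ from the difference quotient and the boundedness of $\jq_\ast$ near $0$.
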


\begin{proof}
  We treat uniqueness and existence separately.{\smallskip}
  
  {\noindent}{{\em Uniqueness.\/}} Suppose that $\je_1$ and
  $\je_2$ are two continuous functions satisfying both axioms
  $(\mathbf{A_1}, \mathbf{A_2})$. By Lemma~\ref{lemma:ifexistse}, there exist
  continuous residue functions $\mathfrak{q}_1$ and $\mathfrak{q}_2$ such that
  $\je_j (x) = 1 + \gamma x + x^2 \mathfrak{q}_j (x)$ for $j \in \{1,
  2\}$, and both residues obey the functional equation
  \eqref{eq:functional_q}. By Lemma~\ref{lemma:fixpoints}, this functional
  equation has a unique continuous solution; hence $\mathfrak{q}_1
  =\mathfrak{q}_2$. It follows that $\je_1 (x) =\je_2 (x)$
  for all $x \in \RR$, proving uniqueness.{\smallskip}
  
  {\noindent}{{\em Existence.\/}} Let ${\color{black}{\mathfrak{q}_{\ast}}}$
  denote the unique continuous solution of \eqref{eq:functional_q} provided by
  Lemma~\ref{lemma:fixpoints}, and define
  \begin{equation}
    {\je} (x) := 1 + \gamma x + x^2 
    {\color{black}{\mathfrak{q}}}_{\ast} (x) .
  \end{equation}
  We verify that ${\je}$ satisfies both axioms.
  
  To check the duplication identity $(\mathbf{A_1})$, we compute the square of
  $\je (x)$:
  \begin{eqnarray}
    \je^2 (x) & = & (1 + \gamma x + x^2 \mathfrak{q}_{\ast} (x))^2
    \nonumber\\
    & = & 1 + \gamma^2 x^2 + x^4 \mathfrak{q}_{\ast}^2 (x) + 2 \gamma x + 2
    x^2 \mathfrak{q}_{\ast} (x) + 2 \gamma x^3 \mathfrak{q}_{\ast} (x)
    \nonumber\\
    & = & 1 + \gamma (2 x) + (2 x)^2  \left[ \frac{\gamma^2}{4} + \left(
    \frac{1}{2} + \gamma \frac{2 x}{4} \right) \mathfrak{q}_{\ast} (x) +
    \frac{(2 x)^2}{16} \mathfrak{q}_{\ast}^2 (x) \right] . 
  \end{eqnarray}
  Since $\mathfrak{q}_{\ast}$ satisfies the functional equation
  \eqref{eq:functional_q}, the expression inside the square brackets is
  precisely the definition of ${\color{black}{\mathfrak{q}_{\ast} (2 x)}}$.
  Therefore, the equation simplifies to:
  \begin{equation}
    {\je}^2 (x) = 1 + \gamma (2 x) + (2 x)^2 
    {\color{black}{\mathfrak{q}}}_{\ast}  (2 x) =
    {\je} (2 x),
  \end{equation}
  which proves that axiom $(\mathbf{A_1})$ holds.
  
  Finally, we verify the initial conditions $(\mathbf{A_2})$. Direct
  evaluation yields $\je (0) = 1$. For the derivative at the origin,
  we compute the limit of the difference quotient:
  \begin{equation}
    {\je}' (0) = \lim_{x \to 0} 
    \frac{{\je} (x) - {\je}
    (0)}{x} = \lim_{x \to 0}  \frac{1 + \gamma x + x^2 
    {\color{black}{\mathfrak{q}}}_{\ast} (x) - 1}{x} = \lim_{x \to 0} (\gamma
    + x {\color{black}{\mathfrak{q}}}_{\ast} (x)) .
  \end{equation}
  Since $\mathfrak{q}_{\ast}$ is bounded (and continuous) at $0$, we have
  $\lim_{x \to 0} x\mathfrak{q}_{\ast} (x) = 0$; thus $\je' (0) =
  \gamma$. This confirms $(\mathbf{A_2})$ and completes the proof.
\end{proof}

\section{Proofs of Lemma~\ref{lemma:fixpoints} and \ref{lemma:ifexistse}} \label{sec:proofs}

The existence and uniqueness of the fixed point on an arbitrary compact
interval $I_a := [- a, a]$ with $a > 0$, are established through the
construction of a weighted Banach space. This setting enables effective
control of the contraction induced by the linear component of the operator,
while simultaneously suppressing the quadratic nonlinearity and ensuring a
uniform bound on the parameter $\gamma$.

\subsection{Admissible weight sequences}A central difficulty in proving
existence and uniqueness lies in handling the nonlinearity of the operator
$T$. When $\mathcal{C}^0$ is equipped with the standard uniform norm, the
quadratic term prevents $T$ from being a {{\em global\/}} contraction. To
address this issue, we introduce the notion of admissible weight sequences. By
working within a suitably weighted Banach space, one can control the linear
contraction of the domain and suppress the quadratic growth by allowing a
tunable parameter $\lambda$ to tend to infinity.

In this subsection, we first formulate a set of abstract axioms that
characterize these admissible weight sequences. Next, we demonstrate the
practical applicability of this framework by explicitly verifying the
conditions for two concrete families of functions: Gaussian weights and
polynomial power weights.

\begin{definition}[Admissible weight sequences]
  \label{eq:defAWS}Let $\Omega = (\omega_{\lambda})_{\lambda \in \NN}$ be a
  sequence of continuous functions $\omega_{\lambda} : \RR \to \RR$. We say
  $\Omega$ is an \text{{\itshape{admissible weight sequence}}} if, for any
  compact interval $I_a$, it satisfies the following properties eventually in
  $\lambda$, in the sense that for every compact interval $I_a$ there exists
  $\lambda_{\ast} = \lambda_{\ast} (a) \in \NN$ such that for all $\lambda
  \geqslant \lambda_{\ast}$ the following conditions hold:{\smallskip}
  \begin{enumerate}
    \item {{\em Strict positivity and lower bound\/}}:
    \begin{equation}
      \inf_{x \in I_a} \omega_{\lambda} (x) \geqslant 1.
      \label{eq:omegalowbound}
    \end{equation}
    \item {{\em Linear suppression\/}}: Defining the ratio
    \begin{equation}
      R_1 (x, \lambda) := \frac{\omega_{\lambda}  (x / 2)}{\omega_{\lambda}
      (x)}, \label{eq:R1def}
    \end{equation}
    we have
    \begin{equation}
      \sup_{x \in I_a} R_1 (x, \lambda) \leqslant 1 \quad \text{and} \quad
      \lim_{\lambda \to \infty} \sup_{x \in I_a} [|x|R_1 (x, \lambda)] = 0.
      \label{eq:R1prop}
    \end{equation}
    \item {{\em Quadratic suppression\/}}: Defining the {{\em quadratic\/}}
    ratio
    \begin{equation}
      R_2 (x, \lambda) := \frac{\omega_{\lambda}^2  (x / 2)}{\omega_{\lambda}
      (x)}, \label{eq:R2def}
    \end{equation}
    we require
    \begin{equation}
      \lim_{\lambda \to \infty} \sup_{x \in I_a} [x^2 R_2 (x, \lambda)] = 0.
      \label{eq:R2prop}
    \end{equation}
  \end{enumerate}
  We emphasize that all estimates are uniform on $I_a$. However, the threshold
  $\lambda_{\ast}$ is allowed to depend on the interval size $a$.
\end{definition}

Note that all estimates are uniform on $I_a$, but the threshold
$\lambda_{\ast}$ and the associated constants may depend on $a$. Let
$\mathcal{X}_a = \left( \mathcal{C}^0 (I_a, \CC), \left\| \cdot \,
\right\|_{\lambda} \right)$ be the normed space of continuous functions
equipped with the weighted norm (note that $\mathcal{X}_a$ also depends on
$\lambda$, though we suppress this in the notation for brevity):
\begin{equation}
  \| \jq \|_{\lambda} := \sup_{x \in I_a}  \frac{| \jq (x) |}{\omega_{\lambda}
  (x)} .
\end{equation}
Since $\omega_{\lambda}$ is continuous and strictly positive on the compact
set $I_a$, there exist constants $0 < m_{\lambda} \leqslant M_{\lambda} <
\infty$ such that $m_{\lambda} \leqslant \omega_{\lambda} (x) \leqslant
M_{\lambda}$ for all $x \in I_a$. It follows that
\begin{equation}
  \frac{1}{M_{\lambda}} \|f\|_{\infty} \leqslant \|f\|_{\lambda} \leqslant
  \frac{1}{m_{\lambda}} \|f\|_{\infty},
\end{equation}
so the weighted norm $\| \cdot \|_{\lambda}$ is equivalent to the standard
uniform norm on $\mathcal{C}^0 (I_a, \CC)$, and hence $\mathcal{X}_a$ is a
Banach space.

Let $\jball = \{ \jq \in \mathcal{X}_a : \| \jq \|_{\lambda} \leqslant 1\}$
denote the closed unit ball. Endowed with the metric induced by $\| \cdot
\|_{\lambda}$, the set $\jball$ is a complete metric space, being a closed
subset of a Banach space.

Our goal is to show that, for sufficiently large $\lambda$, the operator $T$
maps $\jball$ into itself and is a strict contraction on $\jball$. The
Banach--Caccioppoli fixed-point theorem (Theorem~\ref{thm:banach}) will then
guarantee the existence and uniqueness of a fixed point of $T$ in $\jball$.

Before that, to illustrate the effectiveness of this framework, we now verify
the admissibility conditions for two concrete classes of weights: Gaussian
weights and polynomial power weights. In both cases, the key estimates rely on
the following elementary bounds: \ for any $c > 0$ and $\lambda \in \NN$, the
functions $|x| e^{- c \lambda x^2}$ and $x^2 e^{- c \lambda x^2}$ attain their
global maxima on $\RR$ at $x^2 = 1 / (2 c \lambda)$ and $x^2 = 1 / (c
\lambda)$, respectively. Consequently, we have the uniform global bounds:
\begin{equation}
  \sup_{x \in \RR} [|x|e^{- c \lambda x^2}] = \frac{1}{\sqrt{2 c  e \lambda}} 
  \quad \text{and} \quad \sup_{x \in \RR} [x^2 e^{- c \lambda x^2}] =
  \frac{1}{c  e  \lambda} \label{eq:exponential_suppression_bounds}
\end{equation}
In particular, both quantities vanish as $\lambda \to \infty$.

\begin{proposition}[Gaussian Weights]
  The sequence of Gaussian weights
  \begin{equation}
    \omega_{\lambda} (x) = e^{\lambda x^2}, \quad \lambda \in \NN,
  \end{equation}
  is an admissible weight sequence.
\end{proposition}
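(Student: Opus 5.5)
We verify the three conditions of Definition~\ref{eq:defAWS} directly, with an explicit threshold $\lambda_\ast(a)$. In fact we expect to be able to take $\lambda_\ast = 1$, since all the estimates will hold globally on $\RR$. The computation reduces entirely to the two elementary bounds \eqref{eq:exponential_suppression_bounds}.

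\emph{Lower bound.} Since $\lambda \geqslant 0$, we have $\omega_\lambda(x) = e^{\lambda x^2} \geqslant e^0 = 1$ for every $x \in \RR$. Hence \eqref{eq:omegalowbound} holds on every $I_a$, and each $\omega_\lambda$ is continuous and strictly positive.

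\emph{Linear suppression.} We compute the ratio explicitly:
\begin{equation*}
  R_1(x,\lambda) = e^{\lambda x^2/4 - \lambda x^2} = e^{-\frac{3}{4}\lambda x^2}.
\end{equation*}
This is at most $1$ for all $x$, which gives the first part of \eqref{eq:R1prop}. For the second part, apply the first bound in \eqref{eq:exponential_suppression_bounds} with $c = 3/4$:
\begin{equation*}
  \sup_{x \in I_a} |x| R_1(x,\lambda) \leqslant \sup_{x \in \RR} |x| e^{-\frac34 \lambda x^2} = \frac{1}{\sqrt{\tfrac32 e \lambda}}.
\end{equation*}
The right-hand side tends to $0$ as $\lambda \to \infty$, uniformly in $a$.

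\emph{Quadratic suppression.} Similarly,
\begin{equation*}
  R_2(x,\lambda) = e^{2\lambda x^2/4 - \lambda x^2} = e^{-\frac12 \lambda x^2}.
\end{equation*}
Applying the second bound in \eqref{eq:exponential_suppression_bounds} with $c = 1/2$ gives
\begin{equation*}
  \sup_{x \in I_a} x^2 R_2(x,\lambda) \leqslant \frac{2}{e\lambda} \to 0,
\end{equation*}
which is \eqref{eq:R2prop}.

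No step is a genuine obstacle. The only point needing care is the quadratic ratio. There one must check that the squared weight at the half-argument, $e^{\lambda x^2/2}$, still grows more slowly than $e^{\lambda x^2}$, so that the exponent $-\tfrac12\lambda x^2$ remains strictly negative. This is exactly where the Gaussian's quadratic exponent is used: squaring $\omega_\lambda(x/2)$ only doubles the exponent, while halving the argument divides it by four.
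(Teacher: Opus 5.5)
Your proof is correct and follows the paper's argument essentially verbatim: you compute $R_1 = e^{-\frac34\lambda x^2}$ and $R_2 = e^{-\frac12\lambda x^2}$, then apply the bounds \eqref{eq:exponential_suppression_bounds} with $c = 3/4$ and $c = 1/2$, which gives the same estimates $1/\sqrt{\tfrac32 e\lambda}$ and $2/(e\lambda)$. Your additional observation that the estimates hold on all of $\RR$, so that $\lambda_\ast = 1$ works independently of $a$, agrees with the paper's subsequent remark that Gaussian weights are admissible uniformly in the interval size.
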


\begin{proof}
  Let $I_a = [- a, a]$ be an arbitrary compact interval. We verify the three
  conditions of Definition~\ref{eq:defAWS}. The {{\em lower bound\/}}
  \eqref{eq:omegalowbound} is clear: Since $\lambda \geqslant 0$ we trivially
  have $\omega_{\lambda} (x) = e^{\lambda x^2} \geqslant e^0 = 1$ for all $x
  \in I_a$. For the {{\em linear suppression\/}}, we observe that $R_1 (x,
  \lambda) = e^{- (3 / 4) \lambda x^2} \leqslant 1$. Hence, applying the bound
  from \eqref{eq:exponential_suppression_bounds} with $c = 3 / 4$, we obtain:
  \[ \sup_{x \in I_a} [|x|R_1 (x, \lambda)] \leqslant \frac{1}{\sqrt{(3 / 2) e
     \lambda}} \xrightarrow{\lambda \to \infty} 0. \]
  It remains to prove the {{\em quadratic suppression\/}} condition
  \eqref{eq:R2prop}. The quadratic ratio is $R_2 (x, \lambda) = e^{- (1 / 2)
  \lambda x^2}$. Applying \eqref{eq:exponential_suppression_bounds} with $c =
  1 / 2$:
  \[ \sup_{x \in I_a} [x^2 R_2 (x, \lambda)] \leqslant \frac{2}{e \lambda}
     \xrightarrow{\lambda \to \infty} 0. \]
  Thus, the Gaussian sequence is admissible.
\end{proof}

\begin{remark}
  While the family of Gaussian weights has the distinct advantage of being
  universally admissible independently of the interval size $a$, it can seem
  conceptually awkward to use the exponential function to prove the existence
  of the exponential function. Although this approach is formally and
  logically sound, from a didactic perspective it is highly desirable to
  construct admissible sequences that do not rely on the exponential function
  at all. This ensures that our framework can serve as a rigorous,
  self-contained primary definition of the exponential function in real and
  complex analysis. A simple and elegant alternative is the family of
  polynomial power weights. Note, however, that these weights are locally
  tailored, meaning the choice of the sequence now explicitly depends on $a$.
\end{remark}

\begin{proposition}[Polynomial Power Weights]
  For any fixed $a > 0$, choose a constant $c$ such that $0 < c < 8 / a^2$.
  Then the sequence of polynomial power weights
  \begin{equation}
    \omega_{\lambda} (x) = (1 + c  x^2)^{\lambda}, \quad \lambda \in \NN,
  \end{equation}
  is an admissible weight sequence on the interval $I_a = [- a, a]$.
\end{proposition}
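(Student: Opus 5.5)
We verify the three conditions of Definition~\ref{eq:defAWS} directly on $I_a$, following the Gaussian proof but replacing the exponential bounds \eqref{eq:exponential_suppression_bounds} with elementary estimates on rational expressions. The lower bound \eqref{eq:omegalowbound} is immediate, since $1+cx^2\geqslant 1$ gives $\omega_\lambda\geqslant 1$ for every $\lambda$.

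For the linear suppression we set $t=cx^2\in[0,ca^2]$ and write
\[
R_1(x,\lambda)=\Bigl(\frac{1+t/4}{1+t}\Bigr)^{\lambda}=\rho(t)^\lambda .
\]
Here $\rho(t)\leqslant 1$, so $\sup R_1\leqslant 1$. To show $\sup_{I_a}|x|\rho^\lambda\to 0$, we split into a region near the origin and its complement. For $|x|\leqslant\delta$ the product is at most $\delta$. For $\delta\leqslant|x|\leqslant a$, $\rho$ is decreasing in $t$, so $\rho\leqslant\rho(c\delta^2)<1$ and the product is at most $a\,\rho(c\delta^2)^\lambda\to 0$. Hence the $\limsup$ is at most $\delta$ for every $\delta>0$, which gives the limit. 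An explicit rate is also available: $\rho(t)=1-\tfrac{3t/4}{1+t}\leqslant \exp(-\tfrac{3t}{4(1+ca^2)})$. This uses the exponential only as a bound, and the $\delta$-argument avoids even that.

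The quadratic suppression is the main obstacle, and it is where the hypothesis $c<8/a^2$ enters. Here
\[
R_2(x,\lambda)=\Bigl(\frac{(1+t/4)^2}{1+t}\Bigr)^{\lambda}=\sigma(t)^\lambda,\qquad \sigma(t)=\frac{1+t/2+t^2/16}{1+t}=1-\frac{t/2-t^2/16}{1+t}.
\]
We have $\sigma(t)<1$ exactly when $t/2>t^2/16$, that is, when $0<t<8$. Since $t\leqslant ca^2<8$ on $I_a$, we get $\sigma<1$ away from $x=0$, with $\sigma(0)=1$. We then repeat the $\delta$-splitting. Near the origin, $x^2\sigma^\lambda\leqslant\delta^2$ because $\sigma\leqslant 1$ on $[0,ca^2]$. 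On $\delta\leqslant|x|\leqslant a$, the function $\sigma$ is continuous on the compact $t$-interval $[c\delta^2,ca^2]\subset(0,8)$, so it attains a maximum $s_\delta<1$, and the product is bounded by $a^2 s_\delta^\lambda\to 0$. We do not need monotonicity of $\sigma$ here; compactness together with strict inequality suffices. It follows that $\limsup_\lambda\sup_{I_a}x^2R_2\leqslant\delta^2$ for every $\delta$, which yields \eqref{eq:R2prop}.

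Finally, the conditions in Definition~\ref{eq:defAWS} hold for all $\lambda$ (the limit statements are by definition asymptotic), so any $\lambda_\ast$ works. We also note that admissibility is claimed only on the fixed interval $I_a$, consistent with the dependence of $c$ on $a$.
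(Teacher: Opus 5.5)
Your proof is correct, but it establishes the two limits by a different mechanism than the paper does.

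The paper argues quantitatively. Writing the bases as functions of $t=x^2\in[0,a^2]$, it proves uniform linear bounds $C(t)\leqslant 1-\alpha t$ and $B(t)\leqslant 1-\beta t$, with explicit constants $\alpha=\frac{3c}{4(1+ca^2)}$ and $\beta=\frac{c(8-ca^2)}{16(1+ca^2)}$. It then uses $1-s\leqslant e^{-s}$ to dominate $R_1$ and $R_2$ by Gaussians and invokes \eqref{eq:exponential_suppression_bounds}. This yields $\sup_{I_a}|x|R_1\leqslant(2\alpha e\lambda)^{-1/2}$ and $\sup_{I_a}x^2R_2\leqslant(\beta e\lambda)^{-1}$.

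You instead use a qualitative $\delta$-splitting. The hypothesis $c<8/a^2$ enters at the same point: it gives the strict inequality $\sigma<1$ on $(0,ca^2]$, which corresponds to the positivity of $8-ca^2$ in the paper's $\beta$. However, you only need $\max_{[c\delta^2,ca^2]}\sigma<1$, obtained by compactness, rather than a uniform linear lower bound on the deficit $1-\sigma$.

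The paper's route buys explicit rates. These give an explicit threshold $\lambda_\ast(a)$ and, through Step~2 of the proof of Lemma~\ref{lemma:fixpoints}, a computable contraction constant, which fits the paper's emphasis on explicit convergence.

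Your route buys independence from the exponential function. The weights themselves are exponential-free, but the paper's verification still relies on $e^{-s}$ and on \eqref{eq:exponential_suppression_bounds}. Your argument removes that dependence, which goes further toward the didactic aim stated in the remark preceding the proposition.

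You can have both at once. For $0\leqslant s\leqslant 1$, Bernoulli's inequality gives $(1-s)^\lambda\leqslant(1+s)^{-\lambda}\leqslant(1+\lambda s)^{-1}$. Since $\alpha x^2\leqslant 1$ and $\beta x^2\leqslant 1$ on $I_a$, this yields $x^2(1-\beta x^2)^\lambda\leqslant 1/(\lambda\beta)$. Together with AM--GM it also gives $|x|(1-\alpha x^2)^\lambda\leqslant 1/(2\sqrt{\lambda\alpha})$. Both are explicit rates with no exponential involved.
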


\begin{proof}
  Let $I_a = [- a, a]$ be an arbitrary compact interval. We verify the three
  conditions of Definition~\ref{eq:defAWS}. The lower bound
  \eqref{eq:omegalowbound} is trivial: since $c > 0$, we have
  $\omega_{\lambda} (x) \geqslant 1$ for all $x \in I_a$. For the {{\em linear
  suppression\/}}, the ratio is $R_1 (x, \lambda) = (C (x^2))^{\lambda}$,
  where for every $t \geqslant 0$ the function $C$ is given by:
  \[ C (t) = \frac{1 + c  t / 4}{1 + c  t} = 1 - \frac{3 c}{4 (1 + c  t)} t.
  \]
  Clearly $C (0) = 1$. For $t \in [0, a^2]$, we can bound the denominator from
  above by $1 + c  a^2$. Therefore:
  \[ C (t) \leqslant 1 - \alpha t, \quad \text{where} \quad \alpha := \frac{3
     c}{4 (1 + c  a^2)} > 0. \]
  Using the standard inequality $1 - s \leqslant e^{- s}$ for $s \geqslant 0$,
  we obtain $C (t) \leqslant e^{- \alpha t}$, and thus $R_1 (x, \lambda)
  \leqslant e^{- \alpha \lambda x^2} \leqslant 1$. Applying the exponential
  bound \eqref{eq:exponential_suppression_bounds} with the constant $\alpha$,
  the linear suppression holds:
  \[ \sup_{x \in I_a} [|x|R_1 (x, \lambda)] \leqslant \sup_{x \in I_a}
     [|x|e^{- \alpha \lambda x^2}] \leqslant \frac{1}{\sqrt{2 \alpha e
     \lambda}} \xrightarrow{\lambda \to \infty} 0. \]
  For the {{\em quadratic suppression\/}} \eqref{eq:R2prop}, the ratio
  simplifies to $R_2 (x, \lambda) = (B (x^2))^{\lambda}$, with the base:
  \[ B (t) = \frac{(1 + c  t / 4)^2}{1 + c  t} = 1 - \frac{c  t (8 - c  t)}{16
     (1 + c  t)} . \]
  By our strict choice of $c < 8 / a^2$, the term $(8 - c  t)$ is strictly
  bounded from below by $(8 - c  a^2) > 0$ for all $t \in [0, a^2]$. Bounding
  the denominator from above by $16 (1 + c  a^2)$, we obtain:
  \[ B (t) \leqslant 1 - \beta t, \quad \text{where} \quad \beta := \frac{c (8
     - c  a^2)}{16 (1 + c  a^2)} > 0. \]
  Using $1 - s \leqslant e^{- s}$ again, $B (t) \leqslant e^{- \beta t}$,
  which gives $R_2 (x, \lambda) \leqslant e^{- \beta \lambda x^2}$. Applying
  \eqref{eq:exponential_suppression_bounds} with the constant $\beta$ yields:
  \[ \sup_{x \in I_a} [x^2 R_2 (x, \lambda)] \leqslant \sup_{x \in I_a} [x^2
     e^{- \beta \lambda x^2}] \leqslant \frac{1}{\beta e \lambda}
     \xrightarrow{\lambda \to \infty} 0. \]
  Thus, the polynomial power sequence is locally admissible.
\end{proof}

\subsection{Proof of Lemma~\ref{lemma:fixpoints}}We define the operator $T$
acting on functions $\jq : \RR \to \CC$ by
\begin{equation}
  (T \jq) (x) = \frac{\gamma^2}{4} + \left( \frac{1}{2} + \gamma \frac{x}{4}
  \right) \jq \left( \frac{x}{2} \right) + \frac{x^2}{16} \jq^2 \left(
  \frac{x}{2} \right), \label{eq:operator_T_unified}
\end{equation}
so that the proof reduces to the existence of $\jq$ satisfying
\eqref{eq:functional_q} to the fixed point problem $T \jq = \jq$.

To prove uniqueness and existence of the fixed point, we verify that for every
$a > 0$ there exists a sufficiently large $\lambda > 0$ such that the
hypotheses of the Banach--Caccioppoli contraction principle
(Theorem~\ref{thm:banach}) apply to the Banach space $\bigl( \mathcal{C}^0
(I_a, \CC), \| \cdot \|_{\lambda} \bigr)$, yielding the existence and
uniqueness of a fixed point of $T$ in $\jball$. We proceed in three steps.

\subsubsection*{Step 1: Self-Mapping {\opt}$T (\jball) \subseteq
\jball${\cpt}}We verify that for any admissible weight sequence, choosing
$\lambda$ sufficiently large ensures $T$ maps the unit ball into itself. Let
$\jq \in \jball$, which implies $| \jq (x) | \leqslant \omega_{\lambda} (x)$.
Noting that $| \gamma |^2 = 1$ because $\gamma \in \{1, \ji \}$, we apply the
triangle inequality to \eqref{eq:operator_T_unified} to estimate:
\[ | (T \jq) (x) | \leqslant \frac{1}{4} + \left( \frac{1}{2} + | \gamma |
   \frac{|x|}{4} \right) \omega_{\lambda} \left( \frac{x}{2} \right) +
   \frac{x^2}{16} \omega_{\lambda} \left( \frac{x}{2} \right)^2\,. \]
Dividing by $\omega_{\lambda} (x)$ to switch to the weighted norm, we obtain:
\[ \frac{| (T \jq) (x) |}{\omega_{\lambda} (x)} \leqslant \frac{1}{4
   \omega_{\lambda} (x)} + \left( \frac{1}{2} + | \gamma | \frac{|x|}{4}
   \right) R_1 (x, \lambda) + \frac{x^2}{16} R_2 (x, \lambda) \]
with $R_1, R_2$ defined by \eqref{eq:R1def}, \eqref{eq:R2def}. Taking the
supremum over $I_a$ and applying the admissibility properties
\eqref{eq:omegalowbound} and \eqref{eq:R1prop} (i.e., $\omega_{\lambda}
\geqslant 1$ and $R_1 \leqslant 1$):
\[ \|T \jq \|_{\lambda} \leqslant \frac{1}{4} + \frac{1}{2} + \frac{| \gamma
   |}{4} \sup_{x \in I_a} [|x|R_1 (x, \lambda)] + \frac{1}{16} \sup_{x \in
   I_a} [x^2 R_2 (x, \lambda)]\,. \]
As $\lambda \to \infty$, the last two terms vanish by the properties of the
admissible weights, bounding the norm by $3 / 4 + o(1) < 1$ for $\lambda$ sufficiently large. Thus, $T (\jball) \subseteq \jball$ for such values of $\lambda$.

\subsubsection*{Step 2: Strict Contraction on Compact Intervals}Let $\jq_1,
\jq_2 \in \jball$. We evaluate the difference:
\[ (T \jq_1) (x) - (T \jq_2) (x) = \left[ \left( \frac{1}{2} + \gamma
   \frac{x}{4} \right) + \frac{x^2}{16}  \left( \jq_1 \left( \frac{x}{2}
   \right) + \jq_2 \left( \frac{x}{2} \right) \right) \right]  \left[ \jq_1
   \left( \frac{x}{2} \right) - \jq_2 \left( \frac{x}{2} \right) \right] . \]
Since both functions belong to $\jball$, we bound the sum by $| \jq_1 (x / 2)
+ \jq_2 (x / 2) | \leqslant 2 \omega_{\lambda}  (x / 2)$. Extracting the norm
of the difference, we have $| \jq_1 (x / 2) - \jq_2 (x / 2) | \leqslant
\omega_{\lambda}  (x / 2)  \| \jq_1 - \jq_2 \|_{\lambda}$. Taking the absolute
value and dividing the entire expression by $\omega_{\lambda} (x)$ yields:
\[ \frac{| (T \jq_1) (x) - (T \jq_2) (x) |}{\omega_{\lambda} (x)} \leqslant
   \left[ \left( \frac{1}{2} + | \gamma | \frac{|x|}{4} \right) R_1 (x,
   \lambda) + \frac{x^2}{8} R_2 (x, \lambda) \right]  \| \jq_1 - \jq_2
   \|_{\lambda} \]
Taking the supremum over $I_a$, we obtain the Lipschitz constant $\kappa
(\lambda)$:
\[ \kappa (\lambda) = \sup_{x \in I_a} \left[ \frac{1}{2} R_1 (x, \lambda) +
   \frac{| \gamma |}{4} |x|R_1 (x, \lambda) + \frac{x^2}{8} R_2 (x, \lambda)
   \right]\,. \]
Because $R_1 (x, \lambda) \leqslant 1$ (see \eqref{eq:R1prop}), the first term
is strictly bounded by $1 / 2$. By the admissibility properties of the weight,
the second and third terms vanish uniformly on $I_a$ as $\lambda \to \infty$.
Consequently:
\[ \limsup_{\lambda \to \infty} \kappa (\lambda) \leqslant \frac{1}{2} < 1\,. \]
Therefore, for sufficiently large $\lambda$, $T$ is a strict contraction on
$\jball$. By the Banach-Caccioppoli theorem, there exists a unique fixed point
$\jq_a \in \jball$.

\subsubsection*{Step 3: Global Consistency and Uniqueness}We have proven that
for any $a > 0$, there exists a unique continuous solution $\jq_a$ on $I_a$.
If $0 < a_1 < a_2$, the restriction of $\jq_{a_2}$ to $I_{a_1}$ must be a
solution to the fixed point problem on $I_{a_1}$. By uniqueness, this
restriction must coincide with $\jq_{a_1}$. This compatibility allows us to
define a unique global function $\jq_{\ast} : \RR \to \CC$ by setting
$\jq_{\ast} (x) = \jq_a (x)$ for any $x \in I_a$. Furthermore, the same
contraction argument holds if we replace $\mathcal{C}^0 (I_a, \CC)$ with the
Banach space of \text{{\itshape{bounded}}} functions $\mathcal{B} (I_a, \CC)$.
Thus, $\jq_{\ast}$ is the unique solution even among bounded
functions.{\hspace*{\fill}}$\Box$

\subsection{Proof of Lemma~\ref{lemma:ifexistse}}In this subsection we prove
Lemma~\ref{lemma:ifexistse} whose utility is in showing that there is a one to
one correspondence between the set of quadratic residuals and the
corresponding ``exponential'' functions $\je$ satisfying axioms
$(\mathbf{A_1}, \mathbf{A_2})$. Note that, at this stage, we still don't know
that a function $\je$ satisfying these axioms exists, nor that it is unique.

\begin{proof}
  For $x \neq 0$, the function $\jqtwo$ is uniquely determined by inverting
  \eqref{eq:ansatzq}:
  \begin{equation}
    \jqtwo = \frac{\jetwo (x) - (1 + \gamma x)}{x^2} . \label{eq:ansatzqfrome}
  \end{equation}
  Since $\jetwo$ is continuous, $\jqtwo$ is continuous on $\RR \setminus
  \{0\}$. To ensure continuity on the whole of $\RR$, we must verify that the
  limit exists as $x \to 0$ and equals $\jqtwo (0)$. We proceed in three
  steps.{\smallskip}
  
  {\noindent}{{\em Step 1: Functional Equation\/}}. We first verify that
  $\jqtwo$ satisfies~\eqref{eq:functional_q} for $x \neq 0$. Substituting the
  expression \eqref{eq:ansatzq} into the duplication identity
  $\jetwo (x) = \jetwo^2 (x / 2)$, we
  obtain:
  \begin{equation}
    1 + \gamma x + x^2 \jq (x) = \left( 1 + \gamma \frac{x}{2} + \frac{x^2}{4}
    \jq \left( \frac{x}{2} \right) \right)^2 .
  \end{equation}
  Expanding the square on the right-hand side, cancelling the linear term $1 +
  \gamma x$, and dividing by $x^2$ yields exactly the functional equation
  \eqref{eq:functional_q}. We define $\jqtwo (0) = \gamma^2 / 2$ so that the
  equation holds at $x = 0$ as well. However, we must rule out the possibility
  that $\jqtwo$ is unbounded near the origin.{\smallskip}
  
  {\noindent}{{\em Step 2: Boundedness near the origin\/}}. We show that any
  residue $\jq : \RR \to \CC$ satisfying the ansatz must be bounded around the
  origin. From the differentiability of $\je$ at 0 (Axiom
  $\mathbf{A_2}$), we deduce that $x \jq (x) \to 0$ as $x \to 0$.
  Consequently, there exists a constant $c > 0$ and an interval $I_{\delta} :=
  (- \delta, \delta)$ with $\delta > 0$, such that $|x \jq (x) | \leqslant c$
  for every $x \in I_{\delta}$.
  
  Using the functional equation \eqref{eq:functional_q}, we can bound $\left|
  \jqtwo \right|$ as follows
  \[ \left| \jq (x) \right| \leqslant \frac{1}{2} \left| \jq \left(
     \frac{x}{2} \right) \right| + \frac{1}{4}  \left( | \gamma | + \left|
     \frac{x}{2} \jq \left( \frac{x}{2} \right) \right| \right)^2 \leqslant
     \frac{1}{2} \left| \jq \left( \frac{x}{2} \right) \right| + \frac{1}{4} 
     (| \gamma | + c)^2 . \]
  We set $\kappa := \frac{1}{4} (| \gamma | + c)^2$ and iterate the previous
  inequality $j$ times to yield, at least for every $x \neq 0$,
  \[ \left| \jqtwo (x) \right| \leqslant \frac{1}{2^j} \left| \jq \left(
     \frac{x}{2^j} \right) \right| + \kappa \sum_{i = 0}^{j - 1} \frac{1}{2^i}
     = \frac{1}{|x|} \left| \frac{x}{2^j} \jq \left( \frac{x}{2^j} \right)
     \right| + \kappa \sum_{i = 0}^{j - 1} \frac{1}{2^i} . \]
  Taking the limit as $j \rightarrow \infty$, the first term vanishes (because
  $a_j \jqtwo (a_j) \rightarrow 0$ for any sequence $(a_j)_{j \in \NN}$ that
  converges to zero), and the geometric series converges to $2 \kappa$. Thus
  $\left| \jqtwo (x) \right| \leqslant 2 \kappa$ for every $x \in I_{\delta}$,
  proving that $\jq$ is uniformly bounded near $0$.{\smallskip}
  
  {\noindent}{{\em Step 3: Uniqueness of the residue.\/}} So far, we have
  shown that $\jqtwo$ is a global solution of the functional equation
  \eqref{eq:functional_q} that is continuous on $\RR \setminus \{ 0 \}$ and
  bounded on $I_{\delta}$. By Lemma~\ref{lemma:fixpoints}, there exists a
  {{\em unique\/}} bounded solution $\jqtwo_{\ast}$ of the functional
  equation\eqref{eq:functional_q}, which is known to be continuous everywhere.
  Therefore $\jqtwo = \jqtwo_{\ast}$ on $I_{\delta}$. This implies that
  $\jqtwo$ is continuous at the origin as well.
\end{proof}

\section{Fixed point characterization of the Natural
Logarithm}\label{sec:banach_proof}

In this section, we prove that the natural logarithm is the unique fixed point
of a suitably defined linear operator acting on a carefully constructed complete metric space.

\subsection{Definition of the Operator and the Functional Setup}Let $I = [1 /
A, A]$ be a closed interval with $A > 1$. We define the linear functional
operator $T$ as:
\begin{equation}
  T [f] (x) = 2 f (\sqrt{x})\,.
\end{equation}
The transformation $x \mapsto \sqrt{x}$ maps the interval $I$ strictly into
itself, since $\sqrt{x} \in [1 / \sqrt{A}, \sqrt{A}] \subset I$ for all
$x \in I$. As a consequence, all iterates of $T$ remain within the domain $I$,
and the operator is well-defined on any subspace of $\mathcal{C}_b \left( I,
\RR \right)$, the Banach space of bounded continuous functions on $I$. Also,
the functional equation $T [f] = f$ naturally admits a one-parameter family of
solutions, $f (x) = c \log x$ for $c \in \RR$. 

To isolate the natural logarithm, corresponding to the normalization $c=1$, we
impose the initial conditions $f(1)=0$ and $f'(1)=1$ (cf.~axioms
$(\mathbf{B_1},\mathbf{B_2})$ in \eqref{eqax:B1} and \eqref{eqax:B2}). Within
our topological framework, these conditions are incorporated by introducing the
affine reference function $f_0(x)=x-1$ and by working in a suitably weighted
function space.

We now introduce the functional framework in which the fixed point analysis
will be carried out. Let $\mathcal{B} \left( I, \RR \right)$ denote the
subspace of $\mathcal{C}_b \left( I, \RR \right)$ consisting of all continuous
functions $h : I \to \RR$ such that $h (1) = 0$ and
\begin{equation}
  \|h\|_{\mathcal{B}} := \sup_{x \in I \setminus \{1\}}  \frac{|h (x) | \cdot
  x}{(x - 1)^2} < + \infty\,.
\end{equation}
This weighted norm captures the quadratic vanishing of functions at $x = 1$
and will play a crucial role in controlling the action of the operator $T$.

We also need the space $\mathcal{M} \left( I, \RR \right)$ as the affine
translation of $\mathcal{B} \left( I, \RR \right)$ by the reference function
$f_0 (x) = x - 1$:
\begin{equation}
  \mathcal{M} \left( I, \RR \right) := \left\{ f : I \to \RR \, : f - f_0 \in
  \mathcal{B} \right\} .
\end{equation}
\begin{lemma}
  The space $(\mathcal{B} \left( I, \RR \right), \| \cdot \|_{\mathcal{B}})$
  is a Banach space. Therefore its affine translation $\mathcal{M} \left( I,
  \RR \right) = f_0 +\mathcal{B} \left( I, \RR \right)$ is a closed subset of
  the ambient functional space and is therefore a complete metric space under
  the induced metric $d (f_1, f_2) = \|f_1 - f_2 \|_{\mathcal{B}}$.
\end{lemma}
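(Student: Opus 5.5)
The plan is to prove that $\|\cdot\|_{\mathcal{B}}$ is a norm and that $\mathcal{B}(I,\RR)$ is complete under it. Completeness of $\mathcal{M}$ will then follow formally, since translation by $f_0$ is an isometry for $d(f_1,f_2)=\|f_1-f_2\|_{\mathcal{B}}$.

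\emph{Norm axioms.} Homogeneity and the triangle inequality are inherited pointwise from the absolute value before the supremum is taken. For definiteness, $\|h\|_{\mathcal{B}}=0$ forces $h(x)=0$ for all $x\neq 1$, and $h(1)=0$ holds by definition of the space. It is also worth recording a comparison with the uniform norm. Put
\[
w(x):=\frac{(x-1)^2}{x}, \qquad x\in I .
\]
This function is continuous and bounded on $I$, with $\sup_I w=:W<\infty$ since $x\geq 1/A>0$. Hence
\[
|h(x)|\leq w(x)\,\|h\|_{\mathcal{B}} \quad\text{for all } x\in I, \qquad\text{so}\qquad \|h\|_\infty\leq W\|h\|_{\mathcal{B}} .
\]
Note that the reverse inequality fails, because $w$ vanishes at $x=1$.

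\emph{Completeness.} Let $(h_n)$ be Cauchy in $\|\cdot\|_{\mathcal{B}}$.
\begin{enumerate}
\item By the comparison above, $(h_n)$ is Cauchy in the uniform norm. It therefore converges uniformly to some $h\in\mathcal{C}_b(I,\RR)$, and $h(1)=\lim h_n(1)=0$.
\item Fix $\varepsilon>0$ and choose $N$ with $\|h_n-h_m\|_{\mathcal{B}}\leq\varepsilon$ for all $n,m\geq N$. For each fixed $x\neq1$ this gives $|h_n(x)-h_m(x)|/w(x)\leq\varepsilon$.
\item Let $m\to\infty$ pointwise, which is legitimate because $w(x)>0$ is a fixed constant for that $x$. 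This yields $|h_n(x)-h(x)|/w(x)\leq\varepsilon$.
\item Take the supremum over $x\in I\setminus\{1\}$ to get $\|h_n-h\|_{\mathcal{B}}\leq\varepsilon$ for all $n\geq N$.
\end{enumerate}
In particular $\|h\|_{\mathcal{B}}\leq\|h_N\|_{\mathcal{B}}+\varepsilon<\infty$, so $h\in\mathcal{B}$ and $h_n\to h$ in $\|\cdot\|_{\mathcal{B}}$.

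The only delicate point is that the weighted norm is not equivalent to the sup norm near $x=1$. The argument handles this by never dividing at $x=1$: the pointwise limit is taken at each $x\neq1$ separately, and the value at $1$ is controlled by uniform convergence.

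\emph{The affine space $\mathcal{M}$.} The statement that $\mathcal{M}=f_0+\mathcal{B}$ is ``closed in the ambient space'' should be read in the metric $d$. A $d$-Cauchy sequence $f_n$ in $\mathcal{M}$ gives a Cauchy sequence $f_n-f_0$ in $\mathcal{B}$. This converges to some $h\in\mathcal{B}$, so $f_n\to f_0+h\in\mathcal{M}$. Therefore $(\mathcal{M},d)$ is a complete metric space, isometric to $\mathcal{B}$.
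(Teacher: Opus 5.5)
Your proof is correct, but it obtains the limit by a different mechanism from the paper's.

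The paper divides by the weight. It sets $H_n(x)=h_n(x)\,x/(x-1)^2$ on the punctured interval $I\setminus\{1\}$ and notes that $\|h_n-h_m\|_{\mathcal{B}}=\|H_n-H_m\|_\infty$. It then takes the uniform limit $H$ in $\mathcal{C}_b(I\setminus\{1\},\RR)$ and rebuilds $h=H\,(x-1)^2/x$ with $h(1)=0$, checking continuity at $1$ from the boundedness of $H$.

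You instead multiply by the weight. The embedding $\|h\|_\infty\le W\|h\|_{\mathcal{B}}$ produces the limit directly in $\mathcal{C}(I)$, so continuity and $h(1)=0$ come for free. The pointwise passage to the limit in $|h_n(x)-h_m(x)|/w(x)\le\varepsilon$ then upgrades uniform convergence to convergence in $\|\cdot\|_{\mathcal{B}}$.

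What each buys:
\begin{itemize}
\item The paper's argument yields slightly more structure. The map $h\mapsto H$ is in fact onto, so $\mathcal{B}$ is isometrically isomorphic to $\mathcal{C}_b(I\setminus\{1\},\RR)$.
\item Yours works only on the compact interval and needs no reconstruction step at $x=1$. It also makes explicit that $\|\cdot\|_{\mathcal{B}}$-convergence implies uniform convergence on $I$.
\end{itemize}

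You also treat $\mathcal{M}$ explicitly, which the paper leaves implicit. Your caveat that ``closed'' must be read in the metric $d$ is warranted: $\mathcal{B}$ is not closed in $\mathcal{C}(I)$ under the uniform norm. For instance, the functions equal to $n(x-1)^2$ for $|x-1|<1/n$ and to $|x-1|$ elsewhere belong to $\mathcal{B}$. They converge uniformly to $|x-1|$, which is not in $\mathcal{B}$. So the only meaningful content of that clause is the completeness of $(\mathcal{M},d)$, which is exactly what you prove.
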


\begin{proof}
  It is clear that $\mathcal{B} \left( I, \RR \right)$ is a normed space. It
  remains to prove completeness. Let $(h_n)_{n \in \NN}$ be a Cauchy sequence
  in $\mathcal{B} \left( I, \RR \right)$. We need to show the existence of an
  element $h \in \mathcal{B} \left( I, \RR \right)$ such that $h_n \rightarrow
  h$ in $\mathcal{B} \left( I, \RR \right)$.
  
  For each $n \in \NN$, define the auxiliary function $H_n (x) := h_n (x)
  \frac{x}{(x - 1)^2}$ on the punctured compact interval $I \setminus \{1\}$.
  By the definition of the norm on $\mathcal{B} \left( I, \RR \right)$, we
  have for any $n, m \in \NN$:
  \begin{equation}
    \|h_n - h_m \|_{\mathcal{B}} = \sup_{x \in I \setminus \{1\}} |H_n (x) -
    H_m (x) | = \|H_n - H_m \|_{\infty} .
  \end{equation}
  This isometry implies that $(H_n)_{n \in \NN}$ is a Cauchy sequence in the
  Banach space $\mathcal{C}_b \left( I\backslash \{ 1 \}, \RR \right)$ of
  bounded, continuous functions on $I \setminus \{1\}$, endowed with the
  supremum norm. Consequently, $H_n$ converges uniformly to a bounded and
  continuous function $H$ (in particular, its supremum norm is finite:
  $\|H\|_{\infty} < \infty$). We now define a candidate limit function $h : I
  \to \RR$ by
  \begin{equation}
    h (x) := \left\{\begin{array}{ll}
      H (x) \hspace{0.17em} \dfrac{(x - 1)^2}{x}, & x \neq 1,\\
      0, & x = 1.
    \end{array}\right.
  \end{equation}
  Since $x \in I = [1 / A, A]$ implies $x > 0$, this definition is meaningful.
  Moreover, $h$ is continuous on $I \setminus \{1\}$ because $H$ is continuous
  there. To check continuity at $x = 1$, note that
  \begin{equation}
    |h (x) - h (1) | = |h (x) | = |H (x) | \frac{(x - 1)^2}{x} \leqslant
    \|H\|_{\infty} \frac{(x - 1)^2}{x} \; \longrightarrow 0 \qquad \text{as }
    x \to 1.
  \end{equation}
  Thus, $h \in \mathcal{C}_b (I, \RR)$. Next, we verify that $h \in
  \mathcal{B} \left( I, \RR \right)$. We have already established $h (1) = 0$.
  Furthermore, evaluating its norm yields:
  \begin{equation}
    \|h\|_{\mathcal{B}} = \sup_{x \in I \setminus \{1\}}  \frac{|h (x) | \cdot
    x}{(x - 1)^2} = \sup_{x \in I \setminus \{1\}} |H (x) | = \|H\|_{\infty} <
    \infty\,.
  \end{equation}
  So $h \in \mathcal{B} \left( I, \RR \right)$. Moreover, $\|h_n -
  h\|_{\mathcal{B}} = \|H_n - H\|_{\infty} \longrightarrow 0$ because $H_n \to
  H$ uniformly. This proves that every Cauchy sequence in $\mathcal{B} \left(
  I, \RR \right)$ converges, and thus $(\mathcal{B} \left( I, \RR \right), \|
  \cdot \|_{\mathcal{B}})$ is a Banach space.
\end{proof}

\begin{theorem}[The natural logarithm]
  Let $I = [1 / A, A]$ be a closed interval with an arbitrary constant $A >
  1$. Then, the linear operator $T [f] (x) = 2 f (\sqrt{x})$ is a strict
  global contraction on the complete metric space $\mathcal{M} \left( I, \RR
  \right)$ with contraction constant $\kappa = 1 / 2$.
  
  Consequently, $T$ admits a unique fixed point in $\mathcal{M} \left( I, \RR
  \right)$, and this fixed point is the natural logarithm.
\end{theorem}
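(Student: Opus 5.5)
The plan is to apply Theorem~\ref{thm:banach} to $T$ on the complete metric space $\mathcal{M}(I,\RR)=f_0+\mathcal{B}(I,\RR)$ provided by the preceding lemma. The key observation is that $T$ is linear. For $f\in\mathcal{M}$ we can therefore write
\[
T[f]-f_0 \;=\; T[f-f_0] \;+\; \bigl(T[f_0]-f_0\bigr),
\]
and for $f_1,f_2\in\mathcal{M}$ we have $T[f_1]-T[f_2]=T[f_1-f_2]$ with $f_1-f_2\in\mathcal{B}$. Both the self-mapping property and the contraction estimate then reduce to statements about $T$ acting on $\mathcal{B}$, together with one explicit computation for $f_0$.

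\emph{Step 1 (contraction on $\mathcal{B}$).} Let $h\in\mathcal{B}$. Then $T[h]$ is continuous on $I$, since $\sqrt{x}\in I$, and $T[h](1)=2h(1)=0$. For $x\in I\setminus\{1\}$ we have $\sqrt{x}\neq 1$, so
\[
\frac{|T[h](x)|\,x}{(x-1)^2}\;\le\; 2\|h\|_{\mathcal{B}}\,\frac{(\sqrt{x}-1)^2}{\sqrt{x}}\cdot\frac{x}{(x-1)^2}
\;=\; 2\|h\|_{\mathcal{B}}\,\frac{\sqrt{x}}{(\sqrt{x}+1)^2}\;\le\;\tfrac12\|h\|_{\mathcal{B}}.
\]
The last inequality is AM--GM in the form $(\sqrt{x}+1)^2\ge 4\sqrt{x}$. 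Hence $T$ maps $\mathcal{B}$ into itself with $\|T[h]\|_{\mathcal{B}}\le\frac12\|h\|_{\mathcal{B}}$. This yields $d(T[f_1],T[f_2])\le\frac12\,d(f_1,f_2)$ on $\mathcal{M}$, so $\kappa=1/2$. Since $x=1$ gives equality in AM--GM, the constant is sharp.

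\emph{Step 2 (self-mapping).} A direct computation gives
\[
T[f_0](x)-f_0(x)=2(\sqrt{x}-1)-(x-1)=-(\sqrt{x}-1)^2 .
\]
This is continuous and vanishes at $1$. Its weighted quantity is
\[
\frac{(\sqrt{x}-1)^2\,x}{(x-1)^2}=\frac{x}{(\sqrt{x}+1)^2}\le 1,
\]
so $T[f_0]-f_0\in\mathcal{B}$. Combined with Step~1 and the decomposition above, this shows $T(\mathcal{M})\subseteq\mathcal{M}$. Theorem~\ref{thm:banach} then gives a unique fixed point $f_\ast\in\mathcal{M}$.

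\emph{Step 3 (identification with the logarithm).} This is the only point requiring care, because what it means to say the fixed point ``is'' the logarithm depends on whether $\log$ is already available. If it is, I would check that $\log\in\mathcal{M}$. The function
\[
g(x)=\frac{(\log x-(x-1))\,x}{(x-1)^2}
\]
is continuous on $I\setminus\{1\}$ and tends to $-\tfrac12$ as $x\to1$ by the second-order Taylor expansion, so it is bounded on the compact set $I$. Since $2\log\sqrt{x}=\log x$, the logarithm is a fixed point of $T$, and uniqueness gives $f_\ast=\log$.

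Independently of this, I would record intrinsically that $f_\ast$ satisfies the axioms $(\mathbf{B_1},\mathbf{B_2})$ on $I$. Axiom $(\mathbf{B_1})$ is exactly the fixed-point equation read at $x^2$, valid whenever $x,x^2\in I$. Axiom $(\mathbf{B_2})$ follows from $|f_\ast(x)-(x-1)|\le C(x-1)^2/x$, which gives $f_\ast(1)=0$ and $f_\ast'(1)=1$. This makes the fixed point an axiomatic characterization in the spirit of the exponential case.
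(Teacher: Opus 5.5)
Your proof is correct and follows the paper's argument essentially verbatim: the same decomposition $T[f]-f_0=-(\sqrt{x}-1)^2+2h(\sqrt{x})$, the same substitution $y=\sqrt{x}$ with the AM--GM bound $(\sqrt{x}+1)^2\geqslant 4\sqrt{x}$ giving $\kappa=1/2$, and the same Taylor-expansion check that $\log\in\mathcal{M}$. Proving the contraction on $\mathcal{B}$ before the self-map is a slightly cleaner ordering than the paper's forward reference. Your sharpness aside is true but needs more than you wrote, since $x=1$ is excluded from the supremum: use a test function such as $h(y)=(y-1)^2/y$ and let $x\to 1$.
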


\begin{remark}
  The fact that the natural logarithm $\log x$ is a fixed point is immediate:
  $T [\log] (x) = 2 \log (\sqrt{x}) = \log x$. It remains only to check that $\log
  \in \mathcal{M} \left( I, \RR \right)$. This amounts showing that $h_{\ast}
  (x) := \log x - (x - 1)$ belongs to $\mathcal{B} \left( I, \RR \right)$.
  Since $h_{\ast} (1) = 0$, it remains to show that
  \begin{equation}
    \|h_{\ast} \|_{\mathcal{B}} = \sup_{x \in I \setminus \{1\}} 
    \frac{|h_{\ast} (x) | \cdot x}{(x - 1)^2} < + \infty .
  \end{equation}
  Using the Taylor expansion of $\log x$ at $x = 1$, we get $h_{\ast} (x) = -
  \frac{1}{2}  (x - 1)^2 + o ((x - 1)^2)$. Therefore
  \begin{equation}
    \frac{|h_{\ast} (x) | \cdot x}{(x - 1)^2}
  \end{equation}
  extends continuously to $x = 1$ with finite limit $1 / 2$. Being continuous
  on the compact interval $I$, the supremum is finite, confirming $h_{\ast}
  \in \mathcal{B} \left( I, \RR \right)$ and $\log \in \mathcal{M} \left( I,
  \RR \right)$.
  
  Because the Banach fixed-point theorem yields a unique fixed point of $T$ in
  $\mathcal{M} \left( I, \RR \right)$, and $\log$ is one such fixed point, it
  follows that $\log x$ is the unique fixed point of $T$ in $\mathcal{M} \left(
  I, \RR \right)$.
\end{remark}

\begin{proof}
  We divide the proof in two steps
  
  {\noindent}{{\em Step 1: The operator $T$ maps $\mathcal{M}$ into itself
  \/}}($T (\mathcal{M}) \subset \mathcal{M}$). Let $f \in \mathcal{M} \left(
  I, \RR \right)$. Then $f = f_0 + h$ for some $h \in \mathcal{B} \left( I,
  \RR \right)$. We compute
  \begin{equation}
    T [f] (x) - f_0 (x) = 2 (\sqrt{x} - 1 + h (\sqrt{x})) - (x - 1) = -
    (\sqrt{x} - 1)^2 + 2 h (\sqrt{x})
  \end{equation}
  Thus it is enough to show that both terms on the right-hand side belong to
  $\mathcal{B} \left( I, \RR \right)$. For the first term, define $g (x) = -
  (\sqrt{x} - 1)^2$. Then $g (1) = 0$, and for $x \neq 1$,
  \[ \|g\|_{\mathcal{B}} = \sup_{x \neq 1}  \frac{x}{(\sqrt{x} + 1)^2} . \]
  This function is continuous and clearly bounded on $I$ (approaching
  $1 / 4$ as $x \to 1$), hence $g \in \mathcal{B} \left( I, \RR \right)$. For
  the second term, let $h \in \mathcal{B} \left( I, \RR \right)$. Since
  $\sqrt{x} \in I$ whenever $x \in I$, the composite $x \mapsto h (\sqrt{x})$
  is well defined on $I$. In Step 2 below we show that the map $h \mapsto 2 h
  (\sqrt{\cdot})$ sends $\mathcal{B} \left( I, \RR \right)$ into itself. Hence
  $2 h (\sqrt{x}) \in \mathcal{B} \left( I, \RR \right)$. Therefore, $T [f] -
  f_0 \in \mathcal{B}$ and $T [f] \in \mathcal{M}$.{\smallskip}
  
  {\noindent}{{\em Step 2: Proof of Strict Contraction.\/}} Let $f_1, f_2 \in
  \mathcal{M} \left( I, \RR \right)$ and set $h = f_1 - f_2 \in \mathcal{B}
  \left( I, \RR \right)$. By linearity, $d (T [f_1], T [f_2]) = \|T
  [h]\|_{\mathcal{B}}$. We explicitly compute this norm:
  \[ \|T [h]\|_{\mathcal{B}} = \sup_{x \in I \setminus \{1\}}  \frac{2 |h
     (\sqrt{x}) | \cdot x}{(x - 1)^2} . \]
  Applying the substitution $y = \sqrt{x}$ and setting $J := [1 / \sqrt{A},
  \sqrt{A}] \subset I$, by the elementary inequality $(y + 1)^2 \geqslant 4
  y$, we obtain
  \[ \|T [h]\|_{\mathcal{B}} = \sup_{y \in J \setminus \{1\}} \left[ \frac{|h
     (y) | \cdot y}{(y - 1)^2} \cdot \frac{2 y}{(y + 1)^2} \right] \leqslant
     \frac{1}{2} \sup_{y \in I \setminus \{1\}} \frac{|h (y) | \cdot y}{(y -
     1)^2} = \frac{1}{2} \|h\|_{\mathcal{B}} . \]
  Therefore, $T$ is a strict global contraction on $\mathcal{B} \left( I, \RR
  \right)$ with contraction constant $\kappa = 1 / 2$. Finally, since
  $\mathcal{B} \left( I, \RR \right)$ is Banach, the affine space
  $\mathcal{M} \left( I, \RR \right) = f_0 +\mathcal{B} \left( I, \RR \right)$
  is complete with respect to the metric $d (f, g) = \|f - g\|_{\mathcal{B}}$.
  The Banach fixed-point theorem therefore applies and gives a unique fixed
  point of $T$ in $\mathcal{M}$.
  
  As observed in the remark, $\log x \in \mathcal{M}$ and satisfies $T [\log] =
  \log$. Hence this unique fixed point is precisely the natural logarithm.
\end{proof}

\section{Algorithms and Numerical Results}\label{sec:computational}
This section evaluates the numerical implementation of the methods developed in the preceding sections. We refer to the computational routines implementing the real exponential, complex exponential, and natural logarithm as \emph{kernels}.

\subsection{Overview}\label{sec:overview}
All three kernels follow a fixed-point construction pattern comprising three stages: argument reduction to a small interval, polynomial approximation (\emph{seed}) of the residual function on that interval, and reconstruction of the final value. The design space is governed by two parameters: the degree $K$ of the polynomial seed and the number $N$ of fixed-point iterations. Because each iteration halves the interval, a higher degree can be traded for more iterations at roughly equal accuracy. On general-purpose CPUs, high-degree/low-iteration configurations generally win, whereas architectures with limited multiplier width or storage favor short-seed/iterated configurations.

This fixed point approach replaces traditional table based argument reduction with purely arithmetic transformations, using repeated halvings for the exponential and repeated square roots for the logarithm. It relies exclusively on IEEE 754 arithmetic primitives, including fused multiply add (FMA) and direct exponent manipulation, without memory lookups, making it particularly well suited to vectorization.

The choice of seed differs structurally between functions. The residuals of the exponential and sine--cosine functions are entire; their Taylor series converge quickly, so simple truncated series are used. Conversely, the logarithm's residual converges slowly, requiring a fitted minimax polynomial to save degrees.

Accuracy is reported in ulp (units in the last place) against an 80-bit extended-precision reference. Speed is reported as a ratio to the SLEEF vector library (grades $u35$ and $u10$). All kernels are AVX2, table-free, and run in plain double precision unless marked double-double (dd). Note that these are research prototypes assuming finite, normal, in-range inputs, whereas SLEEF handles all IEEE special values; the comparison is therefore generous to our prototypes by construction, and its outcomes are read with this boundary in mind.

\subsection{Stability and Error Propagation}\label{sec:twoways}

The numerical stability of a fixed-point kernel depends heavily on the coordinate system used to represent its discrete orbit. While the underlying functional operators are contractive analytically, finite-precision arithmetic injects rounding errors that can either be amplified or attenuated during the reconstruction stage.

\subsubsection{The Exponential: Contractive Reconstruction}

The exponential kernel leverages the homomorphism $e^{a+b}=e^ae^b$ to reduce the argument to a small value $t_0$, evaluates a polynomial seed, and then ascends back to the original argument using $N$ successive doublings ($t \mapsto 2t$).
Crucially, the kernel computes this entirely within the residual coordinate $q(t)=(e^t-1-t)/t^2$ which removes the constant and linear contributions and isolates the nonlinear remainder. In this coordinate, the doubling map is represented by the rational recurrence
\begin{equation}\label{eq:qrecurrence}
D_q(q)=\frac14+\left(\frac12+\frac t2\right)q+\frac{t^2}{4}q^2.
\end{equation}
The reconstruction process is computationally contractive. Indeed, differentiating the recurrence with respect to $q$ and evaluating at the exact residual coordinate $q(t)$ gives
$ \partial_q D_q|_{q=q(t)}
={e^t}/{2}$. 
Since the argument reduction guarantees that the reduced argument remains in the interval $|t|\leqslant {\log 2}/{2}<1/2$, the contraction factor is uniformly bounded by ${e^{\log 2/2}}/{2}={\sqrt{2}}/{2}<1$. Therefore, perturbations introduced by floating point rounding during an ascent step are geometrically attenuated by subsequent doubling operations. The resulting error accumulation remains bounded, and the attainable accuracy is essentially independent of the number of reconstruction steps $N$.

In contrast, the naive approach of storing $y=e^t$ and repeatedly squaring it
($y\mapsto y^2$) has a fundamental drawback. Each squaring step doubles the
relative error, causing perturbations to grow by a factor of $2^N$ after $N$
iterations. The residual coordinate replaces this simple multiplication with a
slightly more expensive rational recurrence, but in return it ensures that the
reconstruction phase attenuates rather than amplifies the errors introduced at
each step (see Table~\ref{tab:amplification}).

\begin{remark}[Relation to scaling and squaring]
\label{rem:scaling}
The reconstruction phase is an instance of the classical \emph{scaling and squaring} paradigm underlying modern algorithms for the matrix exponential~\cite{higham2005scaling,almohy2009new}. The argument is first scaled by a power of two, an approximation is constructed on the reduced interval, and the original value is recovered by repeated squaring. Standard implementations of this paradigm, such as MATLAB's \texttt{expm}, execute this upward phase directly on the macroscopic matrix value ($X \mapsto X^2$). As for the scalar case (see Table~\ref{tab:amplification}), this macroscopic reconstruction amplifies injected rounding errors by a factor of two at each step. Consequently, current matrix algorithms must restrict the scaling depth and rely on high-degree Pad\'e approximants to control the backward error. Formulating the matrix ascent in a contractive residual coordinate (analogous to the scalar recurrence) could suppress this error amplification; we leave this as an open direction.
\end{remark}

\begin{table}[tbp]
\centering
\setlength{\arrayrulewidth}{0.8pt}
\renewcommand{\arraystretch}{1.25}
\begin{tabular*}{0.92\textwidth}{@{\extracolsep{\fill}} c c c c @{}}
\hline\noalign{\vskip 1pt}\hline\noalign{\vskip 2pt}
$N$ & \textsc{logarithm}, $T[f]=2f(\sqrt{\cdot})$
    & { \textsc{exponential}, naive $y\mapsto y^{2}$}
    & \textsc{exponential}, residual \\
\noalign{\vskip 2pt}\hline\noalign{\vskip 2pt}
0 & $1\times$   & $1\times$   & $1.00\times$ \\
1 & $2\times$   & $2\times$   & $0.59\times$ \\
2 & $4\times$   & $4\times$   & $0.32\times$ \\
3 & $8\times$   & $8\times$   & $0.17\times$ \\
4 & $16\times$  & $16\times$  & $0.09\times$ \\
5 & $32\times$  & $32\times$  & $0.04\times$ \\
6 & \textbf{$64\times$} & \textbf{$64\times$} & $0.02\times$ \\
\noalign{\vskip 2pt}\hline
\end{tabular*}
\vskip 4pt
\caption{\small{\emph{Error propagation through the reconstruction.} A seed
perturbation $\varepsilon_0=10^{-12}$ propagated through $N$ reconstruction
steps. The logarithm's ascent multiplies it by $2^N$; the naive exponential,
which stores the value $y=e^t$ and ascends by squaring, amplifies the relative
error by exactly the same factor $2^N$; the contractive residual coordinate
attenuates it instead. The middle column shows that the amplification is a
property of the coordinate, not of the function: the same exponential, in the
value coordinate, is precisely as unstable as the logarithm.}}
\label{tab:amplification}
\end{table}

\subsubsection{The Logarithm: Stabilization via the Product Form}
The logarithm is evaluated by first partitioning the input using its IEEE~754 representation. Any $x>0$ is expressed as $x = 2^{E}\,m$ with $m\in[1,2)$. A conditional scaling, $m\mapsto m/2$ and $E\mapsto E+1$ for $m\geqslant\sqrt{2}$, recenters the mantissa to the symmetric interval $m\in[2^{-1/2},2^{1/2})$. Folding one bit of the exponent into a half-integer count $e_h$ further restricts the domain to $m\in[2^{-1/4},2^{1/4})$. These operations modify only the exponent field and therefore incur no rounding error. The logarithm then decomposes additively as
\begin{equation}\label{eq:logsplit}
  \log x = e_h\,\tfrac{\log 2}{2} + \log m,
\end{equation}
reducing the problem to evaluating $\log m$ on a bounded interval near $1$, followed by the addition of the exponent term $e_h\,\tfrac{\log 2}{2}$.
Unlike the exponential kernel, where the recurrence attenuates rounding errors at each step, the logarithm iteration relies on the identity $\log m = 2^N \log (m^{1/2^N})$. Because the reconstruction of $\log m$ involves multiplying the polynomial approximation by the scalar $2^N$, numerical errors introduced during the approximation phase are amplified by this factor.

To mitigate this error growth, the iterative sequence is reformulated. The state variable is defined as the deviation from unity, $u_0 = m - 1$, rather than evaluating the argument $m$ directly. Because the recentered mantissa $m$ lies within $[0.5, 2.0]$, Sterbenz's lemma guarantees that this initial floating-point subtraction is computed exactly.

The fixed-point iteration computes a sequence of nested square roots, $s_k=m^{1/2^k}$, which converges to $1$. Tracking $s_k$ directly leads to numerical instability, as the final logarithm polynomial requires evaluating the difference $s_k-1$, which is subject to subtractive cancellation. To prevent this, the coordinate $u_k := s_k - 1$ is tracked at each step. Computing $u_N = s_N - 1$ directly at the end of the iteration would still incur subtractive cancellation, and the subsequent reconstruction would amplify the resulting error by $2^N$. This is resolved by applying the difference of squares, $s_k^2-1=(s_k-1)(s_k+1)$, yielding the recurrence $u_k = u_{k-1}/(s_k + 1)$. This leads to:
\begin{equation}
 u_N = \frac{u_0}{\prod_{k=1}^{N}(s_k+1)}
\end{equation} 
As $s_k \to 1$, each denominator factor approaches $2$. This sequence of divisions reduces the exact initial deviation $u_0$ by a factor of approximately $2^N$. The product formulation replaces subtractions with additions and divisions. Consequently, when the final reconstruction step scales this attenuated value by $2^N$, it offsets the prior division, preventing the error amplification that characterizes the direct evaluation (see Table~\ref{tab:forms}).

\begin{table}[tbp]
\centering
\setlength{\arrayrulewidth}{0.8pt}
\renewcommand{\arraystretch}{1.25}
\begin{tabular*}{0.8\textwidth}{@{\extracolsep{\fill}} c c c @{}}
\hline\noalign{\vskip 1pt}\hline\noalign{\vskip 2pt}
$N$ & \textsc{naive} & \textsc{product} \\
\noalign{\vskip 2pt}\hline\noalign{\vskip 2pt}
2 & 10.6  & \textbf{5.9} \\
3 & 18.4  & \textbf{5.9} \\
4 & 31.3  & \textbf{5.9} \\
5 & 58.2  & \textbf{5.9} \\
6 & 115.3 & \textbf{5.9} \\
\noalign{\vskip 2pt}\hline
\end{tabular*}
\vskip 4pt
\caption{\small{\emph{Naive versus product logarithmic descent.} Maximum absolute error ($\times10^{-16}$) of the scalar logarithm over $[10^{-3},10^{3}]$ with a degree-10 seed. The naive form grows exponentially, whereas the product form maintains a flat error floor.}}
\label{tab:forms}
\end{table}

\subsubsection{Functional Formulations and Pseudocode}
\label{sec:triptych}

Algorithms~\ref{alg:exp}, \ref{alg:cexp}, and \ref{alg:log} detail the pointwise execution paths. All three share the fixed-point architecture outlined in the overview (\S\ref{sec:overview}): argument reduction (Cody--Waite for the exponential family, quarter-octave for the logarithm), evaluation of a degree-$K$ polynomial seed, and $N$ reconstruction iterations, carried out in the residual coordinate for the exponential family and in the stabilized product form for the logarithm. The complex exponential additionally applies a final quadrant selection, mapping the result on the reduced argument $r\in[-\pi/4,\pi/4]$ back to the full circle.
The implementations rely exclusively on two standard, table-free arithmetic primitives:

\begin{itemize}

\item \textit{Cody--Waite Reduction:} Computes the reduced argument $r=x-kC$ (where $C$ is an irrational period such as $\log 2$ or $\pi/2$) without catastrophic cancellation. To avoid floating-point precision loss, we use standard techniques consisting of representing the constant in simulated extended precision as an unevaluated sum, $C \approx C_{\mathrm{high}} + C_{\mathrm{low}}$. This allows the dominant subtraction $x - k \cdot C_{\mathrm{high}}$ to be computed exactly, preserving the significant digits of the mathematical remainder before safely applying the small correction $k \cdot C_{\mathrm{low}}$.

    \smallskip

   \item \textit{Exponent Scaling ($\operatorname{scalbn}$):} Restores the
$2^k$ factor isolated during the reduction step. Instead of explicitly computing
$2^k$ and performing a floating-point multiplication, which introduces
additional rounding errors and may cause intermediate overflow, the integer
$k$ is directly added to the IEEE~754 binary exponent field of the operand.
This results in an exact scaling operation with $O(1)$ computational cost.
\end{itemize}

\noindent By utilizing these exact operations, the reduction and scaling phases avoid introducing additional rounding error, preserving the numerical accuracy of the core approximations detailed in the algorithms.

\begin{algorithm}[htbp]
\caption{Fixed-point real exponential (residual form)}
\label{alg:exp}
\begin{algorithmic}[1]
\Require $x \in \RR$; parameters $N, K \in \NN$
\Ensure $y \approx e^{x}$
\State $k \gets \operatorname{round}(x / \log 2)$
\State $r \gets \operatorname{fma}(k, -\log 2_{\mathrm{high}}, x) - k\log 2_{\mathrm{low}}$ \Comment{Cody--Waite two-limb reduction}
\State $t \gets r \cdot 2^{-N}$ \Comment{Exact bit-shift decrement of the floating-point exponent}
\State $q \gets \sum_{j=0}^K a_j t^j$ \Comment{Truncated Taylor series of $q(t)=(e^t - 1 - t)/t^2$, $a_j = 1/(j+2)!$}
\Loop\ ($N$ times)
  \State $a \gets \tfrac12 + \tfrac{t}{2}, \quad b \gets \tfrac{t^2}{4}$
  \State $q \gets \tfrac14 + a\,q + b\,q^2$ \Comment{Contractive residual recurrence $D_q$, Eq.~\eqref{eq:qrecurrence}}
  \State $t \gets 2t$ \Comment{Ascend one level toward $r$}
\EndLoop
\State \Return $\operatorname{scalbn} (\operatorname{fma}(r^2, q,\ 1+r),\ k)$ \Comment{Reconstruction $2^k\,(1+r+r^2 q)$}
\end{algorithmic}
\end{algorithm}

\begin{algorithm}[htbp]
\caption{Fixed-point complex exponential (sine and cosine together)}
\label{alg:cexp}
\begin{algorithmic}[1]
\Require $x \in \RR$; parameters $N, K \in \NN$
\Ensure $(\cos x, \sin x)$
\State $\kappa \gets \operatorname{round}(x \cdot 2/\pi) \bmod 4$ \Comment{Quadrant index}
\State $r \gets \textsc{CodyWaiteReduce}_{\pi/2}(x)$ \Comment{Three-limb reduction yielding $r \in [-\pi/4, \pi/4]$}
\State $t \gets r \cdot 2^{-N}$ \Comment{Exact scaling of the reduced argument}
\State $q_r \gets \sum_{j=0}^{\lfloor K/2 \rfloor} b_{2j} t^{2j}$ \Comment{Truncated Taylor seed: even part $q_r=(\cos t-1)/t^2$}
\State $q_i \gets \sum_{j=0}^{\lfloor (K-1)/2 \rfloor} b_{2j+1} t^{2j+1}$ \Comment{odd part $q_i=(\sin t-t)/t^2$}
\Loop\ ($N$ times) \Comment{Pointwise evaluation of the complex operator equation}
  \State $h_r \gets -0.25 + 0.5 q_r - 0.5 t q_i + 0.25 t^2 (q_r^2 - q_i^2)$
  \State $h_i \gets 0.5 q_i + 0.5 t q_r + 0.5 t^2 q_r q_i$
  \State $q_r \gets h_r, \quad q_i \gets h_i, \quad t \gets 2t$
\EndLoop
\State $c \gets 1 + r^2 q_r, \quad s \gets r + r^2 q_i$ \Comment{$(\cos r, \sin r)$ on the reduced argument}
\State \Return $\textsc{QuadrantSelect}(\kappa, c, s)$ \Comment{Recover $(\cos x, \sin x)$ from $(\cos r,\sin r)$ and the quadrant $\kappa$}
\end{algorithmic}
\end{algorithm}

\begin{algorithm}[htbp]
\caption{Fixed-point logarithm (product form)}
\label{alg:log}
\begin{algorithmic}[1]
\Require $x \in \RR^+$; parameters $N, K \in \NN$
\Ensure $y \approx \log x$
\State $(m, e_h) \gets \textsc{QuarterOctaveReduce}(x)$ \Comment{Decomposition mapping $m \in [2^{-1/4}, 2^{1/4})$}
\State $u_{0} \gets m - 1.0$ \Comment{Exact evaluation via Sterbenz's lemma}
\State $s \gets m, \quad D \gets 1.0$
\Loop\ ($N$ times)
  \State $s \gets \sqrt{s}, \quad D \gets D \cdot (s + 1.0)$ \Comment{Accumulate the telescoping denominator $D=\prod(s_k+1)$}
\EndLoop
\State $u \gets u_{0} / D$ \Comment{A single division executing range-reduction to the origin}
\State $P \gets \sum_{j=0}^K c_j u^j$ \Comment{Minimax polynomial seed of $\log(1+u)/u$}
\State \Return $\operatorname{fma}(e_h, \tfrac{\log 2}{2}, 2^N u P)$ \Comment{Reconstruction; $\tfrac{\log 2}{2}$ scales $e_h$, carried in half-integer units}
\end{algorithmic}
\end{algorithm}

\begin{table}[tbp]
\centering
\setlength{\arrayrulewidth}{0.8pt}
\renewcommand{\arraystretch}{1.05}
\begin{tabular*}{\textwidth}{@{\extracolsep{\fill}} l l c c l l @{}}
\hline\noalign{\vskip 1pt}\hline\noalign{\vskip 2pt}
\textsc{function} & \textsc{prec.} & $N$ & $K$ & \textsc{max ulp} & \textsc{vs sleef} \\
\noalign{\vskip 2pt}\hline\noalign{\vskip 2pt}
$\exp$      & plain & 0 & 12 & 0.67 \,($u10$) & \vf{$1.08\times$} $u10^{\dagger}$ \\
$\exp$      & plain & 1 & 10 & 0.65 \,($u10$) & $0.91\times$ $u10^{\dagger}$ \\
$\exp$      & plain & 2 &  8 & 0.66 \,($u10$) & $0.81\times$ $u10^{\dagger}$ \\
$\exp$      & plain & 3 &  7 & 0.66 \,($u10$) & $0.77\times$ $u10^{\dagger}$ \\
\noalign{\vskip 1pt}\hline\noalign{\vskip 2pt}
$\sin,\cos$ & plain & 0 &  8 & 1.42 \,($u10^{*}$) & \vf{$2.38\times$} $u35$,\; \vt{$3.85\times$}$u10$\\
$\sin,\cos$ & plain & 1 &  6 & 1.42 \,($u10^{*}$) & \vf{$1.88\times$} $u35$,\; \vt{$3.05\times$}$u10$\\
$\sin,\cos$ & plain & 2 &  5 & 1.44 \,($u10^{*}$) & \vf{$1.49\times$} $u35$,\; \vt{$2.41\times$}$u10$\\
$\sin,\cos$ & plain & 3 &  4 & 2.21 \,($u35$)         & \vf{$1.24\times$} $u35$,\; \vt{$2.00\times$}$u10$\\
\noalign{\vskip 1pt}\hline\noalign{\vskip 2pt}
$\log$      & plain & 0 & 15 & 1.99 \,($u35$) & \vf{$1.64\times$} $u35$,\; \vt{$2.21\times$}$u10$\\
$\log$      & plain & 1 & 12 & 3.21 \,($u35$) & \vf{$1.27\times$} $u35$,\; \vt{$1.72\times$}$u10$\\
$\log$      & plain & 2 & 10 & 3.54 \,($u35$) & \vt{$1.16\times$} $u35$,\; \vt{$1.57\times$}$u10$\\
$\log$      & plain & 3 & 10 & 3.32 \,($u35$) & $0.85\times$ $u35$,\; \vt{$1.14\times$}$u10$\\
$\log$      & dd    & 0 & 14 & 0.59 \,($u10$) & \vf{$1.16\times$} $u35$,\; \vf{$1.57\times$}$u10$\\
\noalign{\vskip 2pt}\hline
\end{tabular*}

\vskip 4pt
\begin{minipage}{1\textwidth}

\caption{\small{\emph{Fixed-point kernels versus SLEEF (AVX2).} Accuracy is reported as maximum ulp error, categorized by grade: $u10$($\leqslant1$), $u35$ ($\leqslant3.5$), and $u10^{*}$ (just above $u10$). Speed ratios are measured against the respective SLEEF grade ($^{\dagger}$the $\exp$ kernel compares to $u10$, as SLEEF lacks a $u35$ routine). \vf{Full bordeaux} indicates configurations that are faster and meet the target grade; \vt{light bordeaux} indicates faster configurations that exceed the error threshold; plain black denotes slower performance. Seeds are Taylor polynomials for $\exp$ and $\sin,\cos$, and minimax for $\log$. As established in \S\ref{sec:twoways}, the rising error floor of the $\log$ kernel reflects its amplifying macroscopic reconstruction.}}
\label{tab:vector}
\end{minipage}
\end{table}

\subsection{The Vector Regime: Throughput Characteristics}\label{sec:vector}

Under SIMD vectorization, where a single instruction operates concurrently across multiple register lanes (e.g., four double-precision operands), eliminating table lookups avoids irregular memory traffic. Rather than paying the latency of gather operations at data-dependent addresses, table-free kernels run entirely within registers.

To evaluate this approach, the AVX2-vectorized prototypes were benchmarked against the SLEEF library. As detailed in \S\ref{sec:overview}, accuracy is reported in units in the last place (ulp) against an 80-bit reference, and speed is measured as a relative throughput ratio against SLEEF's $u10$ ($\leqslant 1$ ulp) and $u35$ ($\leqslant 3.5$ ulp) baselines.

The polynomial seeds approximate the analytic residuals of each coordinate system: $q(t)=(e^{t}-1-t)/t^2$ for the real exponential, $q_r(y)=(\cos y-1)/y^2$ and $q_i(y)=(\sin y-y)/y^2$ for the complex exponential, and a minimax approximation of $\log(1+u)/u$ for the logarithm. The logarithmic residual is evaluated on the reduced mantissa $m=1+u$, where the deviation $u=m-1$ is computed exactly. To preserve relative accuracy near $m=1$ (where $u \to 0$), we employ the standard technique of factoring out the root. Instead of approximating the logarithm directly, we approximate the well-conditioned quotient $\log(1+u)/u$ with a polynomial. Multiplying this polynomial by $u$ recovers the logarithm and guarantees bounded relative error, whereas a direct polynomial approximation of $\log(1+u)$ would leave a tiny non-zero absolute error at $u=0$, causing the relative error to diverge.

Table~\ref{tab:vector} indicates that the performance and accuracy of these portable, table-free prototypes are comparable to those of a standard production library. Their performance and accuracy profiles depend on the algebraic structure of each function's reconstruction stage:

\begin{itemize}
    \item \textit{Real Exponential (No Cancellation):} Reconstructed via $e^{r}=1+r+r^2q$, the calculation avoids subtractive cancellation. This preserves relative precision, allowing standard double precision to achieve $u10$ accuracy ($\approx0.65$ ulp). Because the reconstruction is contractive, the error floor remains constant across iteration depths $N \in \{0, 1, 2, 3\}$. This decoupling permits the selection of a design point according to hardware constraints: $N=0$ maximizes throughput on general-purpose CPUs ($1.08\times$ SLEEF), while higher $N$ values reduce coefficient storage requirements on constrained architectures.
    \smallskip

    \item \textit{Sine and Cosine (Local Cancellation):} Reconstructed via $\cos x=1+r^2q_r$ and $\sin x=r+r^2q_i$, local cancellation near the zeros limits the global precision to approximately $1.42$ ulp (denoted as $u10^{*}$). Away from these zeros, the same seed yields an error of $0.68$ ulp, indicating that the error bound is determined by the algebraic structure rather than the polynomial approximation. This kernel yields higher throughput than SLEEF across all evaluated iteration depths, showing a $2\times$ to $4\times$ increase compared to the strict $u10$ baseline, with an accuracy reduction of approximately $0.5$ ulp.
    \smallskip

    \item \textit{Logarithm (Macroscopic Shift and Amplification):} Reconstructed via $\log x=e_h\,\tfrac{\log 2}{2}+\log m$, the overall precision is limited by the addition of the exponent term $e_h\,\tfrac{\log 2}{2}$. In standard double precision, this operation limits accuracy to approximately $2$ ulp at $N=0$. As the iteration depth $N$ increases, this reconstruction structure amplifies intermediate rounding errors by a factor of $2^N$. Consequently, the error increases ($1.99 \to 3.54$ ulp) and exceeds the $u35$ threshold at $N=2$. Achieving $u10$ accuracy ($0.59$ ulp) requires evaluating the addition in double-double (dd) precision at $N=0$; this configuration maintains a higher throughput than the corresponding SLEEF baselines.
    
    \item \textit{Logarithm (Macroscopic Shift and Amplification):} Reconstructed via $\log x=E\log 2+\log m$, the overall precision is limited by the addition of the exponent term $E\log 2$. In standard double precision, this operation limits accuracy to approximately $2$ ulp at $N=0$. As the iteration depth $N$ increases, this reconstruction structure amplifies intermediate rounding errors by a factor of $2^N$. Consequently, the error increases ($1.99 \to 3.54$ ulp) and exceeds the $u35$ threshold at $N=2$. Achieving $u10$ accuracy ($0.59$ ulp) requires evaluating the addition in double-double (dd) precision at $N=0$; this configuration maintains a higher throughput than the corresponding SLEEF baselines.
\end{itemize}

\noindent In summary, the vector implementation characteristics follow a consistent pattern within the $(K, N)$ design space. The iteration depth $N$ and the polynomial degree $K$ can be adjusted inversely while maintaining approximately constant accuracy. This allows the kernels to be adapted to specific hardware constraints, such as balancing multiplier utilization against register usage. However, regardless of the chosen $(K, N)$ configuration, the algebraic structure of the final reconstruction formula determines the minimum achievable error in finite precision.

\begin{table}[!t]
\centering
\setlength{\arrayrulewidth}{0.8pt}
\renewcommand{\arraystretch}{1.05}
\begin{tabular*}{0.86\textwidth}{@{\extracolsep{\fill}} l l c c c l @{}}
\hline\noalign{\vskip 1pt}\hline\noalign{\vskip 2pt}
\textsc{function} & \textsc{prec.} & $N$ & $K$ & \textsc{max error} & \textsc{vs sleef} \\
\noalign{\vskip 2pt}\hline\noalign{\vskip 2pt}
$\exp$      & plain & 1 & 2 & $3.0\times10^{-6}$ (rel.) & \vt{$2.01\times$} $u10^{\dagger}$ \\
$\exp$      & plain & 2 & 2 & $1.7\times10^{-7}$ (rel.) & \vf{$1.49\times$} $u10^{\dagger}$ \\
$\exp$      & plain & 3 & 2 & $1.1\times10^{-8}$ (rel.) & \vf{$1.18\times$} $u10^{\dagger}$ \\
$\exp$      & plain & 4 & 2 & $6.5\times10^{-10}$ (rel.) & $0.98\times$ $u10^{\dagger}$ \\
$\exp$      & plain & 1 & 4 & $2.2\times10^{-9}$ (rel.) & \vf{$1.80\times$} $u10^{\dagger}$ \\
$\exp$      & plain & 2 & 4 & $3.1\times10^{-11}$ (rel.) & \vf{$1.39\times$} $u10^{\dagger}$ \\
$\exp$      & plain & 3 & 4 & $4.7\times10^{-13}$ (rel.) & \vf{$1.10\times$} $u10^{\dagger}$ \\
$\exp$      & plain & 4 & 4 & $7.3\times10^{-15}$ (rel.) & $0.90\times$ $u10^{\dagger}$ \\
\noalign{\vskip 1pt}\hline\noalign{\vskip 2pt}
$\sin,\cos$ & plain & 1 & 2 & $9.2\times10^{-6}$ (abs.) & \vt{$2.42\times$} $u35$ \\
$\sin,\cos$ & plain & 2 & 2 & $2.6\times10^{-7}$ (abs.) & \vf{$1.83\times$} $u35$ \\
$\sin,\cos$ & plain & 3 & 2 & $7.6\times10^{-9}$ (abs.) & \vf{$1.39\times$} $u35$ \\
$\sin,\cos$ & plain & 4 & 2 & $2.3\times10^{-10}$ (abs.) & \vf{$1.11\times$} $u35$ \\
$\sin,\cos$ & plain & 1 & 4 & $4.4\times10^{-11}$ (abs.) & \vf{$2.15\times$} $u35$ \\
$\sin,\cos$ & plain & 2 & 4 & $7.7\times10^{-14}$ (abs.) & \vf{$1.61\times$} $u35$ \\
$\sin,\cos$ & plain & 3 & 4 & $2.3\times10^{-16}$ (abs.) & \vf{$1.24\times$} $u35$ \\
$\sin,\cos$ & plain & 4 & 4 & $1.3\times10^{-16}$ (abs.) & \vf{$1.01\times$} $u35$ \\
\noalign{\vskip 1pt}\hline\noalign{\vskip 2pt}
$\log$      & plain & 1 & 4 & $2.5\times10^{-7}$ (abs.) & \vf{$1.96\times$} $u35$ \\
$\log$      & plain & 2 & 4 & $1.6\times10^{-8}$ (abs.) & \vf{$1.25\times$} $u35$ \\
$\log$      & plain & 3 & 4 & $1.9\times10^{-9}$ (abs.) & $0.92\times$ $u35$ \\
$\log$      & plain & 4 & 4 & $1.0\times10^{-9}$ (abs.) & $0.72\times$ $u35$ \\
$\log$      & plain & 1 & 5 & $9.6\times10^{-9}$ (abs.) & \vf{$1.86\times$} $u35$ \\
$\log$      & plain & 2 & 5 & $1.2\times10^{-9}$ (abs.) & \vf{$1.25\times$} $u35$ \\
$\log$      & plain & 3 & 5 & $9.6\times10^{-10}$ (abs.) & $0.91\times$ $u35$ \\
$\log$      & plain & 4 & 5 & $9.5\times10^{-10}$ (abs.) & $0.72\times$ $u35$ \\
\noalign{\vskip 2pt}\hline
\end{tabular*}

\vskip 4pt
\begin{minipage}{0.86\textwidth}
\caption{\small{\emph{Short-seed configurations for the $10^{-6}$-accuracy regime.} Metrics denote relative error for $\exp$ (whose output spans many binades) and absolute error for $\sin,\cos$, and $\log$ (whose outputs are of moderate size and cross zero). Two seed degrees ($K$) are evaluated across iteration depths $N=1,\dots,4$. For the contractive kernels, the error falls geometrically by $\approx2^{-(K+2)}$ per step, whereas the $\log$ kernel hits an amplification floor near $10^{-9}$ (\S\ref{sec:twoways}). Ratios compare against SLEEF's lowest-accuracy tier ($^{\dagger}$u10 for $\exp$). \vf{Full bordeaux} marks configurations that are faster and meet the $10^{-6}$ target; \vt{light bordeaux} marks faster configurations whose error exceeds the target.}}\label{tab:fast}
\end{minipage}
\end{table}

\subsubsection{A Reduced-Accuracy, High-Throughput Regime}
While standard production libraries define accuracy targets for general numerical
computing, specific applications such as computer graphics, signal processing,
and machine learning can operate with error tolerances near $10^{-6}$
(comparable to single precision). Standard double-precision libraries typically
do not provide performance optimizations for this reduced-accuracy regime.

The fixed-point methods can target this regime by reducing the degree of the polynomial seed, $K$. As shown in Table~\ref{tab:fast}, increasing the iteration depth $N$ with these lower-degree seeds reduces the approximation error by a factor of approximately $2^{-(K+2)}$ per step for the contractive kernels (exponential and sine/cosine). For the logarithm, the error bounds near $10^{-9}$ when the polynomial approximation error intersects the error amplification inherent to its reconstruction phase. Adjusting the $(K,N)$ parameters allows the kernels to traverse the accuracy-throughput design space.

\subsubsection{The $(K,N)$ Trade-Off as a Hardware Portability Axis}\label{sec:tradeoff}

Inspection of Table~\ref{tab:vector} reveals an accuracy preserving frontier in
the $(K,N)$ parameter space. For the real exponential kernel, configurations
ranging from $(N{=}0,K{=}12)$ to $(N{=}3,K{=}7)$ achieve approximately
$0.65$~ulp accuracy. This suggests that the degree of the polynomial seed
approximation can be reduced by increasing the number of reconstruction
iterations, without affecting the final accuracy. This observation can also be
justified by the formal error estimates developed above, although we do not
pursue this analysis further here.

The existence of this frontier is a direct consequence of the contractive
residual coordinate. Since the reconstruction phase attenuates rather than
amplifies rounding errors, increasing the number of iterations affects the
computational cost but does not significantly alter the attainable accuracy.
Consequently, configurations along the frontier are numerically equivalent but
have different computational characteristics, allowing the implementation to
select the most suitable configuration for the target architecture.

\begin{itemize}
   \item \textit{General purpose CPUs (high $K$, low $N$):} On architectures where
    polynomial coefficients can be stored in registers and evaluated with low
    latency, it is preferable to move along the frontier toward higher $K$ and
    fewer iterations, since the extra coefficients are essentially free while each
    iteration adds latency (see the throughput ratios in
    Tables~\ref{tab:vector} and~\ref{tab:fast}).

    \smallskip

    \item \textit{FPGAs and embedded platforms (low $K$, high $N$):} On
    architectures with limited coefficient storage, register availability, or
    multiplier resources, it is preferable to move toward lower $K$, compensating
    with a higher iteration count $N$ to stay on the frontier. This trades
    coefficient storage for repeated arithmetic, reducing the memory footprint at
    the cost of additional iterations (see the throughput ratios in
    Tables~\ref{tab:vector} and~\ref{tab:fast}).
\end{itemize}

\noindent This trade-off is conceptually similar to CORDIC, exchanging a complex initial approximation for additional iterative steps to maintain accuracy. The underlying mechanisms differ fundamentally. CORDIC relies on precomputed look-up tables, while the present approach reconstructs the function value via a contractive rational recurrence in the residual coordinate, requiring no tables. This contractive property attenuates rounding errors from one step to the next, making the attainable accuracy essentially independent of the iteration count $N$. In contrast, a direct reconstruction would amplify rounding errors, causing accuracy to degrade with $N$ (see Table~\ref{tab:amplification}).

\begin{table}[tbp]
\centering
\setlength{\arrayrulewidth}{0.8pt}
\renewcommand{\arraystretch}{1.05}
\begin{tabular*}{0.9\textwidth}{@{\extracolsep{\fill}} l c c c c l @{}}
\hline\noalign{\vskip 1pt}\hline\noalign{\vskip 2pt}
\textsc{coordinate} & $N$ & $K$ & \textsc{max ulp} & \textsc{vs $u35$}$^{\ddagger}$ & \textsc{vs $u10$}$^{\ddagger}$ \\
\noalign{\vskip 2pt}\hline\noalign{\vskip 2pt}
residual $\rho$ & 0 & 8  & $4.9\times10^{6}$ & $1.55\times$ & $2.41\times$ \\
residual $\rho$ & 1 & 8  & $1.5\times10^{3}$ & $0.90\times$ & $1.40\times$ \\
residual $\rho$ & 2 & 8  & $1.50$            & $0.50\times$ & $0.78\times$ \\
residual $\rho$ & 3 & 8  & $1.50$            & $0.34\times$ & $0.52\times$ \\
residual $\rho$ & 4 & 8  & $1.50$            & $0.25\times$ & $0.39\times$ \\
\noalign{\vskip 1pt}\hline\noalign{\vskip 2pt}
residual $\rho$ & 0 & 12 & $4.6\times10^{3}$ & $1.27\times$ & $1.97\times$ \\
residual $\rho$ & 1 & 12 & $1.50$            & $0.80\times$ & $1.24\times$ \\
residual $\rho$ & 2 & 12 & $1.50$            & $0.49\times$ & $0.75\times$ \\
residual $\rho$ & 3 & 12 & $1.50$            & $0.33\times$ & $0.51\times$ \\
residual $\rho$ & 4 & 12 & $1.50$            & $0.25\times$ & $0.39\times$ \\
\noalign{\vskip 2pt}\hline
\end{tabular*}
\vskip 4pt
\caption{\small{\emph{Preliminary simulation of the logarithm in the contractive residual coordinate $\rho$.} Unlike the value-coordinate formulation, the error stabilizes at a flat floor ($\approx 1.5$ ulp) as the iteration depth $N$ increases, demonstrating damped error propagation. The rows far from the floor ($N=0$; $N=1$ at $K=8$) reflect that the residual approach relies on the iteration: without descent, a short seed cannot cover the reduced interval. $^{\ddagger}$Throughput ratios are unoptimized (evaluating standard square roots in plain form) and are reported only to indicate order of magnitude relative to SLEEF.}}
\label{tab:logrho}
\end{table}

\subsection{Discussion}\label{sec:synthesis}

The preceding sections analyzed the numerical accuracy and SIMD performance of the proposed fixed-point algorithms. We conclude by summarizing the principal algorithmic trade-offs, discussing the current limitations, and outlining directions for future work.

\subsubsection{Performance Trade-offs}\label{sec:role}

As discussed in \S\ref{sec:tradeoff}, the parameters $(K,N)$ define a family of implementations that exchange polynomial complexity for iteration depth while maintaining essentially the same accuracy. For the real exponential, configurations ranging from $(K,N)=(12,0)$ to $(7,3)$ all achieve a maximum error of approximately $0.65$--$0.67$ ulp. The choice of $(K,N)$ may therefore be adapted to the characteristics of the target architecture without sacrificing numerical accuracy.

On the CPUs considered here, the throughput optimum lies at the low-$N$ end of the frontier ($N=0$, reaching $1.08\times$ the SLEEF $u10$ throughput). Configurations with smaller polynomial seeds and deeper iteration become attractive on architectures where polynomial evaluation is comparatively expensive, such as extended-precision arithmetic or memory-constrained hardware.

\subsubsection{Future Work: A Residual Formulation for the Logarithm}\label{sec:invitation}

Natural engineering extensions include implementations for additional SIMD instruction sets, such as AVX-512 and ARM Neon, together with branchless handling of IEEE-754 exceptional values.

A more significant algorithmic direction concerns the logarithm. Throughout this work, the logarithm is reconstructed from its macroscopic value. As discussed in \S\ref{sec:twoways}, this reconstruction amplifies accumulated rounding errors by a factor of $2^N$. This amplification is not an intrinsic property of the logarithm, but rather a consequence of the chosen coordinate. As for the exponential, one may instead introduce the residual coordinate
\begin{equation}
\rho(u)=\frac{\log(1+u)-u+\frac12u^2}{u^3},
\end{equation}
so that $\log(1+u)=u-\frac12u^2+u^3\rho(u)$. Setting $v:=\sqrt{1+u}-1$, one gets $\log(1+u)=2\log(1+v)$,
which yields
\begin{equation}
\rho(u)=\frac{2\rho(v)+2+\frac12v}{(2+v)^3}.
\end{equation}
\noindent The Taylor expansion of $\log(1+u)$ around zero gives
$\rho(u)=\frac13+O(u)$, showing that the residual remains bounded as $u\to0$ and admits the continuous extension $\rho(0)=1/3$. The important stability property, however, is the propagation of perturbations through the fixed-point iteration. Since the contribution of an error in $\rho(v)$ to the updated value of $\rho(u)$ is multiplied by
$\frac{2}{(2+v)^3}=\frac14+O(v)$,
small perturbations are reduced by approximately a factor of four when $u$ is close to zero. Thus, the residual formulation damps rounding errors during the iteration rather than amplifying them, in contrast with the macroscopic formulation, where reconstruction errors grow proportionally to $2^N$.

A rigorous convergence analysis of this operator, for example in an appropriate weighted Banach space, remains the subject of future work. Nevertheless, preliminary numerical experiments exhibit the expected behavior. As shown in Table~\ref{tab:logrho}, increasing the iteration depth rapidly reduces the approximation error, after which the maximum error stabilizes at approximately $1.5$ ulp, with no evidence of the exponential error growth observed in the value-coordinate formulation.

These preliminary results indicate that the residual formulation restores the algorithmic symmetry between the exponential and logarithm. Achieving full $u10$ accuracy will likely require carrying the final reconstruction, $E\log 2+\log m$, in extended precision.

\subsubsection{Concluding Remarks}\label{sec:closing}

The numerical stability and computational efficiency of the proposed algorithms follow directly from the interaction between their fixed-point structure and finite-precision arithmetic. SIMD architectures are particularly well suited to these methods because they replace table lookups with arithmetic operations that can be efficiently vectorized.

The choice of coordinate determines the numerical behavior. For the exponential, performing the iteration in a contractive residual coordinate suppresses the propagation of rounding errors. For the logarithm, reconstructing the function from its macroscopic value requires explicit stabilization, whereas performing the iteration directly in the residual coordinate recovers the same contractive mechanism as for the exponential, providing a unified framework for both elementary functions.

The main contribution of this work is the identification of the residual coordinate as the fundamental mechanism that eliminates the $2^N$ amplification of rounding errors. This is a structural property of the underlying algorithm rather than of a particular implementation, and we expect it to remain relevant across different hardware platforms and software environments. Fully exploiting its practical potential will require the extensive architecture-specific optimization, engineering effort, and benchmarking that have gone into the development of state-of-the-art mathematical libraries. We hope that the present work provides a sound theoretical foundation and a practical point of departure for researchers and developers working on high-performance mathematical software, both in academia and in industry, and that it encourages further interaction between the mathematical analysis, numerical analysis, and high-performance computing communities in the development of faster, more reliable, and more energy-efficient elementary function libraries.

\section*{Declarations}

\noindent
\textit{Author Contributions:} All authors contributed equally to this work.
\smallskip

\noindent
\textit{Ethical Approval:} Not applicable. This study does not involve human participants or animals.
\smallskip

\noindent
\textit{Conflict of Interest:} The authors declare that they have no conflicts of interest.
\smallskip

\noindent
\textit{Data and Code Availability:} No datasets were generated or analyzed during the current study. The C++ source code and benchmarking kernels evaluated in this work are currently available from the corresponding author upon reasonable request. The authors plan to clean, document, and host the complete codebase in a publicly accessible GitHub repository before final publication.
\smallskip

\noindent
\textit{Declaration of Generative AI and AI-Assisted Technologies:}
During the preparation of this manuscript, the authors used Grammarly and Gemini to check spelling and improve the fluency of selected portions of the text. The manuscript was initially prepared using TeXmacs and subsequently exported to LaTeX. Gemini was used to identify and correct translation or conversion errors arising during this process, to assist with LaTeX formatting, and to generate LaTeX code to improve the formatting of tables that were not converted correctly during the export.

In addition, Claude was used to assist in implementing and testing the algorithms described in the paper.

No generative AI tools were used in the development of the mathematical ideas, the derivation of the theoretical results, or the proofs. The authors are fully responsible for the content of the manuscript and for the accuracy, validity, and integrity of all results presented.

\section*{Acknowledgements}
\textsc{G.D.F.} is a member of GNAMPA--INdAM. He gratefully acknowledges partial financial support from the GNAMPA Project CUP\_E53C25002010001, and from the University of Naples Federico II through the FRA Project-B ``VarMoCry'' on  \emph{Variational Analysis and Modeling of Liquid Crystals}. Further support is acknowledged from the Italian Ministry of University and Research through the PRIN 2022 project \emph{Variational Analysis of Complex Systems in Material Science, Physics and Biology} (No.~2022HKBF5C).

\bibliographystyle{alpha}
\bibliography{Eix_Bib}

\end{document}